\overfullrule=1mm
\documentclass[11pt,reqno]{amsart}

\usepackage{color}

\newtheorem{theorem}{Theorem}[section]
\newtheorem{lemma}[theorem]{Lemma}
\newtheorem{prop}[theorem]{Proposition}
\newtheorem{cor}[theorem]{Corollary}

\newtheorem{rem}[theorem]{Remark}

\theoremstyle{definition}
\newtheorem{defin}[theorem]{Definition}
\newtheorem{slem}[theorem]{{\sc Sublemma}}
\newtheorem{remark}[theorem]{\sc Remark}


\numberwithin{equation}{section}

\usepackage{mathrsfs, amsmath,amssymb,amsfonts, amsthm, dsfont}
\usepackage{hyperref}





\newcommand{\C}{{\mathbb C}}

\newcommand{\la}{\lambda}

\newcommand{\e}{\varepsilon}

\newcommand{\supp}{\text{supp }}
\renewcommand{\Pi}{\varPi}
\renewcommand{\Re}{\rm{Re} \,}

\renewcommand{\epsilon}{\varepsilon}

\newcommand{\hcal}{{\mathcal H}}
\newcommand{\fcal}{{\mathcal F}}

\numberwithin{equation}{section}

\newcommand{\R}{{\mathbb R}}

\usepackage{epsfig}
\usepackage{amscd}
\usepackage{amsmath, amssymb, amsthm}
\usepackage{graphics}
\usepackage{color}
\usepackage{dsfont}
\newtheorem*{main-theorem}{Main Theorem}
\newtheorem*{old-thm}{Theorem}
\theoremstyle{definition}
\numberwithin{equation}{section}

\def\11{\mathds{1}}

\def\Re{\,\mathrm{Re}\,}

\def\supp{\mathrm{supp}\,}

\def\phi{\varphi}
\def\half{{\frac{1}{2}}}

\def\be{\begin{eqnarray*}}
\def\ee{\end{eqnarray*}}
\def\ben{\begin{eqnarray}}
\def\een{\end{eqnarray}}

\def\L2R{L_{\text{Rest}}^2}

\newcommand{\lcal}{\mathcal{L}}

\newcommand{\ncal}{\mathcal{N}}

\newcommand{\ocal}{\mathcal{O}}

\newcommand{\scal}{\mathcal{S}}

\begin{document}
\title[Sup norms of Cauchy data ]{Sup norms of Cauchy data of eigenfunctions on 
manifolds with concave boundary}

\author{Christopher D. Sogge  and Steve Zelditch}

\address{Department of Mathematics, Johns Hopkins University, Baltimore,
MD, 21218}

\address{Department of Mathematics, Northwestern  University, Evanston, IL 60208, USA}

\email{csogge@math.jhu.edu}

\email{zelditch@math.northwestern.edu}

\thanks{Research partially supported by NSF grants DMS-1361476, resp. DMS-1206527 and DMS-1541126.}
\begin{abstract}
We prove that the Cauchy data of Dirichlet or Neumann   $\Delta$- eigenfunctions of  Riemannian manifolds with concave (diffractive) boundary can only
achieve maximal sup norm bounds if there exists a self-focal point on the boundary, i.e. a point at which a positive
measure of geodesics leaving the point return to the point. In the case of real analytic Riemannian manifolds with
real analytic boundary, maximal sup norm bounds on boundary traces of eigenfunctions can  only be achieved if there
exists a point on the boundary at which all geodesics loop back. As an application, the Dirichlet or Neumann 
eigenfunctions of Riemannian manifolds with
concave boundary and non-positive curvature never have eigenfunctions whose boundary traces achieve maximal sup norm bounds. The key new
ingredient is the Melrose-Taylor diffractive parametrix and Melrose's analysis of the Weyl law.

\end{abstract}
\maketitle

\section{Introduction}

Let $(X, g)$ be a compact  Riemannian manifold of dimension $n$, and let $M = X \backslash \ocal$ be the 
exterior of a finite disjoint union of open convex sub-domains (`obstacles').  Thus, $(M, g)$ is a Riemannian manifold  with
geodesically   {\it concave}  boundary $\partial M$.   We consider the eigenvalue problem,
$$\left\{ \begin{array}{l}  -\Delta \phi_{\lambda} = \lambda^2 \phi_{\lambda},
\\
B \phi_{\lambda}  = 0 \;\; \mbox{on}\;\; \partial M \end{array} \right.,$$ where $B$ is the boundary operator,
either  $B \phi = \phi|_{\partial M}$ in the Dirichlet case or $B
\phi = \partial_{\nu} \phi|_{\partial M}$ in the Neumann case.     We  denote by $\{\phi_{j}\}$
an orthonormal basis of eigenfunctions,  $ \langle \phi_j, \phi_k \rangle = \delta_{jk}$, with
$ \lambda_1 < \lambda_2 \leq \cdots $  counted with multiplicity. The inner product is
defined by   $\langle f, g \rangle = \int_{M} f \bar{g} dA$ where $dA$ is the area form of $g$. 

This article is  concerned with sup norm bounds on the Cauchy data 
$$(\phi_j |_{\partial M}, \lambda_j^{-1} \partial_{\nu} \phi_j |_{\partial M} )$$
of Dirichlet  (resp.  Neumann)
eigenfunctions along the boundary. The non-trivial component of the  Cauchy data of eigenfunctions satisfying boundary conditions is often
called the `boundary trace'. We   denote by $r_q u$ the restriction of $u \in C(\bar{M})$ to $\partial M$ at the point
$q \in \partial M$, and we 
denote by  $\gamma_q^B$ the  boundary trace with boundary conditions $B$. Thus, \begin{equation}\label{SCTr} \gamma_q^B =  \left\{ \begin{array}{ll}   r_q, & \rm{Neumann \;case} \\ &\\
r_q \partial_{\nu_q}, &  \rm{Dirichlet \;case} \end{array} \right. \end{equation}  We also denote by
  \begin{equation} \label{phib} \phi_j^b(q) = \gamma_q^B  \phi_j \end{equation}
the non-trivial boundary trace of an eigenfunction satisfying the boundary condition $B$ (either Dirichlet or Neumann). \footnote{
It is often natural to define the normal derivative as the semi-classical derivative  $ \lambda^{-1} 
\partial_{\nu_q}$ and to define the boundary trace in the Dirichlet case
by $ \lambda^{-1} 
r_q \partial_{\nu_q}$ as in (1.2)or Theorem 6  of \cite{HHHZ}. In this article we do not use the semi-classical normal derivative because it would
complicate the formula for the boundary trace of the wave group. }

The goal of this article, as in \cite{SZ1,STZ,SZ2},  is first  to prove universal sup norm bounds on boundary traces
of eigenfunctions (Corollary \ref{REMJUMP}) and more significantly  to characterize the Riemannian manifolds with concave boundary where the sup norm bounds
are achieved. When the universal bounds are achieved, we say that $(M, g, \partial M)$ has {\it maximal eigenfunction
growth along the boundary. }  The sup norm  problem on Cauchy data   makes sense for any Riemannian manifold
with boundary,  but
we restrict to the case of  concave boundaries in this article. Our main result, Theorem \ref{SoZthm}, states
that maximal sup norm bounds are never achieved on Riemannian manifolds with concave boundary and without boundary
self-focal points. In particular, this applies to non-positively curved Riemannian manifolds with concave boundary.  Further motivation to study non-positively curved manifolds with
concave boundary is that
the sup norm results are   needed in \cite{JZ} to count nodal domains on non-positively curved surfaces with concave boundary.

\subsection{Statement of results}

 Before stating the results,
we recall what is known about global sup norm bounds when $\partial M = \emptyset$ and also in the
interior of $M$ when $\partial M \not= \emptyset$.
 The `universal' sup norm bound
\begin{equation} \label{STANDARD} ||\phi_j||_{L^{\infty}} \leq C_g\; \lambda_j^{\frac{n-1}{2}} \end{equation} on n-dimensional Riemannian manifolds  without boundary was extended to manifolds with boundary in \cite{Gr,Sm,Sog}
 But this bound is rarely achieved, and in the articles
\cite{SZ1,STZ,SZ2} there are successively stronger constraints on Riemannian manifolds
without boundary for which  the bound \eqref{STANDARD} is achieved. We refer to such Riemannian manifolds  `$(M,g)$
as having maximum eigenfunction growth', and express the condition as
\begin{equation} \label{OMEGA} ||\phi_j||_{L^{\infty}}  = \Omega( \lambda_j^{\frac{n-1}{2}}), \end{equation}
where $\Omega$ is the negation of ``little oh", i.e. the $\Omega$-symbol indicates that there exists some
subsequence of eigenfunctions $\phi_j$ with $\lambda_j \to \infty$ such that the standard
bound is achieved.
In the boundary-less case of \cite{SZ1}, it is proved that a necessary    condition for maximal growth \eqref{OMEGA} is that
there exists a `self-focal point' or `partial blow down point'  $p$. We will define the terms below. The result was
improved in \cite{STZ} and further improved in \cite{SZ2}, but we will only be concerned with the original condition
in this article. 

\subsubsection{Universal sup norm bounds on manifolds with concave boundary}

The first result of this article gives universal sup norm bounds on Cauchy data parallel to those of \cite{Gr,Sm,Sog} 
in the interior. Recalling \eqref{phib}, we  denote by 
\begin{equation} \label{BTSPEC} \Pi^b_{[0,\lambda]}(q,q') = \sum_{j: \lambda_j \leq \lambda} \phi_j^b(q) \phi_j^b(q') \end{equation} the boundary trace of the spectral projection for $\sqrt{-\Delta}$ for the interval $[0, \lambda]$.

\begin{prop} \label{PTWEYL} For any $C^{\infty}$ Riemannian manifold $(M, g)$ of dimension n with $C^{\infty}$ concave boundary $\partial M$,
$$\Pi^b_{[0,\lambda]}(q,q)= \left\{\begin{array}{ll} C_n \lambda^{n +2 } 
+  \lambda^2
R_D^b(\lambda, q) , & \mbox{Dirichlet}
\\ & \\
C_n \lambda^n 
+ R_N^b(\lambda, q), & \mbox{Neumann}.
\end{array} \right.$$

with

$$R^b_B(\lambda, q)  = O(\lambda^{n-1}) \;\; \mbox{uniformly in q}. $$

\end{prop}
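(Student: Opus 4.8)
The plan is to establish a pointwise Weyl law for the boundary trace of the spectral projection by reducing it to the known interior pointwise Weyl law together with a parametrix analysis near the concave boundary. The key observation is that $\Pi^b_{[0,\lambda]}(q,q)$ can be written as $\gamma_q^B \otimes \gamma_{q'}^B$ applied to the Schwartz kernel of the spectral projector $\mathds{1}_{[0,\lambda^2]}(-\Delta_B)$ and then restricted to the diagonal $q = q'$. First I would pass, via a Tauberian argument, from the sharp spectral projector to a smoothed version: choosing $\rho \in \scal(\R)$ with $\hat\rho$ supported in a small neighborhood of $0$ and $\rho \geq 0$, $\rho > 0$ on $[-1,1]$, it suffices to bound $\sum_j \rho(\lambda - \lambda_j)|\phi_j^b(q)|^2$ by $O(\lambda^{n-1})$ uniformly in $q$, since integrating this estimate over $[0,\lambda]$ and applying the standard Tauberian lemma yields the $O(\lambda^{n-1})$ remainder (the leading $C\lambda^n$ term comes out of the same computation). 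The sign condition $\phi_j^b(q)\overline{\phi_j^b(q)} = |\phi_j^b(q)|^2 \geq 0$ is what makes the Tauberian step legitimate on the diagonal.

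The analytic heart is then to represent $\sum_j \rho(\lambda - \lambda_j)|\phi_j^b(q)|^2$ as an oscillatory integral using a parametrix for the half-wave propagator $e^{it\sqrt{-\Delta_B}}$ on the manifold with concave boundary, valid for $|t|$ small. Here I would invoke the Melrose--Taylor (and Melrose--Sjöstrand) construction of the wave parametrix near a diffractive (concave) boundary point: for short times, the boundary trace of the wave kernel, composed with $\gamma_q^B$ on both sides, is a Fourier integral operator on $\partial M$ whose canonical relation is generated by the broken/glancing geodesic flow, and whose symbol has the appropriate homogeneity — the Dirichlet normal-derivative trace costs one extra power of $\lambda$ relative to the Neumann trace, but in both cases the resulting density on the diagonal is $O(\lambda^{n-1})$ after the $t$-integration against $\hat\rho$. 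Because $\hat\rho$ is supported near $t = 0$, only the short-time parametrix is needed, so no global dynamical assumption enters and the bound is automatically uniform in $q \in \partial M$; the compactness of $\partial M$ then gives the uniform constant. The contribution of glancing rays (points $q$ where the relevant geodesics are tangent to $\partial M$) must be handled with the Melrose--Taylor glancing parametrix, but since we only need an upper bound of the expected order and not an asymptotic expansion, a crude stationary-phase/non-stationary-phase dichotomy suffices.

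The main obstacle I anticipate is the careful bookkeeping of the symbol orders in the boundary parametrix near glancing and diffractive directions — in particular verifying that the Dirichlet-to-Neumann normalization $\lambda_j^{-1}\partial_\nu$ in the Dirichlet case exactly compensates the loss so that one still lands at $\lambda^{n-1}$ and not $\lambda^n$ or $\lambda^{n-1}\log\lambda$. Once the short-time Fourier integral representation is in hand with the correct order, the remaining steps (stationary phase to extract $C\lambda^n$ plus an $O(\lambda^{n-1})$ error, then Tauberian) are routine and parallel the boundaryless arguments of \cite{SZ1} and the interior boundary results of \cite{Gr,Sm,Sog}.
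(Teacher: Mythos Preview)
Your overall architecture matches the paper's: pass to the smoothed spectral sum via a Tauberian argument, represent it through the short-time boundary trace of the cosine propagator, and invoke the Melrose--Taylor diffractive parametrix near the concave boundary. The place where your proposal has a real gap is the glancing contribution, which you dismiss with ``a crude stationary-phase/non-stationary-phase dichotomy suffices.'' This is exactly where the paper spends essentially all of its effort, and no crude argument is available. In the glancing region the boundary-restricted kernel is a Fourier--Airy integral whose phase carries the cubic term $\tfrac{1}{3}s^3|\eta|$ and whose amplitude involves the Airy quotient $A'/A(\beta_0)$, which is not a classical symbol. The phase is degenerate at the fold, so ordinary stationary phase fails, and integration by parts is obstructed there as well. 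Following Melrose \cite{M}, the paper shows that the glancing term $w_{1,G}(t,q,q)$ nonetheless has a \emph{normal} (conormal, classical-symbol) singularity at $t=0$ by making an almost-analytic extension of the Airy amplitude into a half-plane in an auxiliary variable and then integrating by parts in $U=s-r$, collecting boundary terms at $U=0$ that produce the classical expansion (Lemmas \ref{MLEM} and \ref{MLEM2}). Without this step you cannot conclude that $S_q(\lambda,\rho)$ has a polyhomogeneous expansion, and the Tauberian machinery will not deliver the sharp $O(\lambda^{n-1})$ remainder; a naive treatment of the degenerate phase would be expected to lose at least a logarithm.

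Two smaller points. First, to extract the leading coefficient you need an actual asymptotic for $S_q(\lambda,\rho)$ (Proposition \ref{SF2}), not merely an upper bound, so the glancing piece must be shown to \emph{fit into} the expansion, not just be dominated by it; your sentence ``since we only need an upper bound'' is therefore in tension with the preceding one where you say the leading term ``comes out of the same computation.'' Second, the paper's Dirichlet boundary trace is $\phi_j^b=\partial_\nu\phi_j$ \emph{without} the $\lambda_j^{-1}$ factor you invoke, so the order-counting you sketch does not match the statement as written.
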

Complete expansions for smoothed Cauchy data Weyl sums are given in Proposition \ref{SF2}. 
The result
was stated but not proved in \cite{Z4}. 
Universal sup norm bounds  on boundary traces of  eigenfunctions are  obtained from
the jump in the remainder:

\begin{cor}\label{REMJUMP} Under the assumptions above,
\begin{equation}\label{eig3} \sum_{j: \lambda_j = \lambda} |\phi_j^b(q)|^2
=
R_B^b(\lambda,q)-R_B^b(\lambda-0,q) = O(\lambda^{n-1}),
\end{equation}
in the Neumann case and similarly with an extra factor of $\lambda^2$ in the 
Dirichlet case. The remainder is uniform in $q$. Hence, in the Neumann case,
$$\sup_{q \in \partial M} |\phi_j^b(q)| \leq C \lambda^{\frac{n-1}{2}}, \;\; $$
and similarly in the Dirichlet with the right side replaced by $\lambda^{\frac{n+1}{2}}.$
\end{cor}

\subsection{Manifolds with concave boundary and no self-focal boundary points never achieve maximal sup norm bounds}

Our main result is an improvement of the sup-norm estimate of Corollary \ref{REMJUMP} in the
case of manifolds of concave boundary and no self-focal points.

We denote by $\Phi^t$ the  billiard flow (or broken geodesic flow) of $(M, g, \partial M)$. We also denote the broken
exponential map by
$\exp_x \xi = \pi \Phi^1(x, \xi)$. We refer to \cite{HoI-IV} (Chapter XXIV)  or \cite{MT} for background on these notions.

 Given any $x\in \bar{M}$, we denote by  $\lcal_x$ the set
of loop directions at $x$,
\begin{equation} \lcal_x = \{\xi \in S^*_x M : \exists T: \exp_x T
\xi = x \}.
\end{equation}

\begin{defin} We say that   $x$ is a self-focal  point if $|\lcal_x| > 0$ where
$|\cdot|_x$ denotes the surface measure on $S^*_x M$ determined by
the Euclidean metric $g_x$ on $T_x^*M$ induced by $g$. \end{defin}

.






The main result of \cite{SZ1} is that if $(M, g)$ is $C^{\infty}$,  $\partial M = \emptyset$,  and has maximal sup-norm
growth in the sense of \eqref{OMEGA}, then it must possess at least one point $p$
for which $|\lcal_p| > 0$, i.e. a self-focal point. The main  result of this article proves the same result for Cauchy data of
manifolds with concave boundary.

\begin{theorem} \label{SoZthm}
Let $(M, g)$ be a Riemannian manifold of dimension n with geodesically concave boundary. Suppose that
there exist no self-focal points $q \in \partial M$. Then, in the Neumann case,  $$\sup_{q \in \partial M} |\phi_j^b(q)| = o( \lambda^{\frac{n-1}{2}}), \;\; $$
and similarly in the Dirichlet with the right side replaced by $\lambda^{\frac{n+1}{2}}.$
\end{theorem}
\bigskip

The proof of Theorem \ref{SoZthm} is given in \S \ref{SUPSECT}, following
the strategy 
 in \cite{SZ1}. The main step in the proof is the upper bound on smoothed pointwise Weyl sums in  Lemma \ref{1.11}, 
together
with a compactness argument explained at the beginning of Section \ref{SUPSECT}.

Thus, the Cauchy data can only achieve maximal sup norm bounds if there exists a self-focal point on the boundary. 
Examples are given in Sections \ref{EXAMPLEINTRO} and \ref{EXAMPLESECT}. 
Concavity of the boundary is assumed in  order to calculate the singularity at $t = 0$ of the Cauchy data of the wave
group,  and to rule out the possibility of a sequence of eigenfunctions with  strongly enhanced sup norms at the boundary due to diffractive effects.  That is, we need 
to rule out enhanced values of eigenfunctions in  microlocal $\epsilon$-neighborhood around the tangent directions to $\partial M$. For the interior
of convex domains, whispering gallery modes do concentrate around the boundary and peak there. Even so,  whispering gallery modes do not saturate sup norm estimates. For the exterior of convex domains, which is the setting of this article, there do not even exist analogues of whispering gallery modes. But to prove that no sequence of eigenfunctions has strongly enhanced mass due to grazing rays, we  rely here on the Melrose-Taylor parametrices for the wave group.

We refer to the universal bounds as `convexity bounds', and improvements in the case of special geometries
as `sub-convexity bounds'. Our goal is to characterize geometries of $(M, g, \partial M)$ for which sub-convexity
holds.  To state our main result we now introduce the relevant geometric features.
 It appears plausible that
 Theorem \ref{SoZthm} is valid for all Riemannian manifolds with smooth boundary, and possibly for manifolds with corners such as the Bunimovich
stadium or a hyperbolic billiard. But such generalizations would require a different proof which does not make use of a parametrix construction for the wave group of a manifold with boundary. In future
work, we plan to use the scaling method Seeley and Melrose \cite{M84} (in his proof  of Ivrii's two term
Weyl law  \cite{I80}))  in place of a parametrix construction to estimate
sup norms of Cauchy data. However the diffractive case is more concrete and is of independent interest in illustrating
the application of the Melrose-Taylor parametrix, and Melrose's technique
for obtaining the remainder estimate in Weyl's law.

\subsubsection{Manifolds with concave boundary and no self-focal points}

 Theorem \ref{SoZthm}  is based on the estimate in Lemma \ref{1.11} on smooth sums of the Cauchy
 data $|\phi_j^b(q)|^2$.  Although not necessary for the proof of Theorem \ref{SoZthm}, it is natural to 
 ask if the pointwise Weyl law for Cauchy data in Proposition \ref{PTWEYL} can be improved under the
 same no-focal point assumption. 
The next result is a refinement of Proposition \ref{PTWEYL} with the additional assumption
that the set of loops at each $q \in \partial M$ has $(n-1)$-dimensional measure zero.

\begin{prop} \label{EPLEM}  Assume  $(M, g)$ is of dimension n with $C^{\infty}$ concave boundary $\partial M$, and assume that the set of billiard loops at $q \in
\partial M$ has measure zero in $B^*_q\partial M$. Then there exist  continuous functions $Q_D(q)$,
resp. $Q_N(q)$ 
such that

$$\Pi^b_{[0,\lambda]}(q,q)= \left\{\begin{array}{ll} C_n \lambda^{n +2 }  + Q_D(q) \lambda^{n+1} 
+  
R_D^b(\lambda, q) , & \mbox{Dirichlet}
\\ & \\
C_n \lambda^n 
+ Q_N(q) \lambda^{n-1} 
+ R_N^b(\lambda, q), & \mbox{Neumann}.
\end{array} \right.$$
so that, given $\varepsilon>0$,  there exists  a ball $B \subset \partial M$
centered at $q$ and  $\Lambda<\infty$ so that for $\lambda \geq
\Lambda$,

\begin{equation}\label{W1}
 \left\{
\begin{array}{l} |R_D(\lambda,q)|\le \varepsilon\lambda^{n + 1}, \, \, q\in B,\\ \\
|R_N(\lambda,q)|\le \varepsilon\lambda^{n - 1}, \, \, q\in B,
\end{array} \right.
\end{equation}
where $R_D, R_N$ are defined in Proposition \ref{PTWEYL} .

\end{prop}
The remainder estimate implies that $R_D(\lambda, q) = o(\lambda^{n +1})$, resp. $R_N(\lambda, q) = o(\lambda^{n-1})$.
This gives another proof of:

\begin{cor}\label{REMJUMPNSF}  If $(M,g)$ is a compact Riemannian manifold with concave boundary and with no self-focal points,
then 
\begin{equation}\label{eig4} \sum_{j: \lambda_j = \lambda} |\phi_j^b(q)|^2
= o (\lambda^{n-1}),
\end{equation}
in the Neumann case and similarly with an extra factor of $\lambda^2$ in the 
Dirichlet case.
\end{cor}

The functions $Q_D(q)$ and $Q_N(q)$ are conjectured to be given by
\begin{equation} \label{QDN} Q_D(q) = C_{D,n} H(q), \;\; Q_N(q) = C_{N,q} H(q), \end{equation}
where $H(q)$ is the mean curvature of the boundary at $q$ and where $C_{D,n}, C_{N,n}$ are dimensional
constants whose sign depends on the boundary condition. In the Dirichlet case, the coefficients were conjectured by
Ozawa \cite[p. 304]{O} with no assumption on the boundary. Miyazaki \cite{Mi} proved the formula for the interior
of a Euclidean ball.   A  calculation of the first two coefficients at the physics level of rigor is 
given in (4.3) of  \cite{BSS} in dimension two.  Under the non-focal assumption above, the coefficients are the same as those of
the small $t$ expansion of the Cauchy data of the heat kernel and are known to be local geometric invariants. By
an invariance theory argument, it should be possible  to show that the coefficient must be a universal multiple of
the mean curvature. But we are unaware of a reference where this has been proved. We refer to Lemma \ref{eq:BFSSb} 
for further results of this kind and further discussion.

When the non-focal assumption is dropped, the middle term $Q(q) \lambda^{n-1}$ may become more complicated, analogous
to the interior pointwise asymptotics studied by Safarov \cite[Chapter 1.8]{SV}. In this case, it can depend on $\lambda$, hence be  $Q(\lambda, q) \lambda^{n-1}$ where $Q(\lambda,q)$ is bounded but  not necessarily continuous in $\lambda$; it may have jumps. See \cite[Proposition 1.8.16]{SV}
for an example. It does not appear that the analogous results have been proved for Cauchy data. 

 This result may be compared to 
Melrose's  Weyl law with remainder  for $(M,g)$ with $\partial M$ concave and with zero measure of periodic billiard trajectories
   \cite{M}: 
$$N(\lambda) = \# \{j: \lambda_j \leq \lambda\} =  (2 \pi)^{-n} c_n \rm{Vol}(M)\; \lambda^n + c_{n,B} \rm{Vol}(\partial M) \lambda^{n-1} + o(\lambda^{n-1}). $$
Here, $c_{n,B}$ is a coefficient depending on the type (Dirichlet or Neumann) boundary conditions. It is positive
for Neumann boundary conditions and negative for Dirichlet boundary conditions. There is also 
an additional assumption in \cite{M} that we discuss in Section \ref{REMARKON}  below. Melrose used Airy type
glancing parametrices constructed in \cite{M75,M78,  Tay1}. Ivrii proved this result for general manifolds with boundary
\cite{I80}.

\subsection{Sketch of the proofs}

The proofs of  Proposition \ref{PTWEYL} and Proposition \ref{EPLEM}  rely on three results on boundary traces, some of which are valid without the concavity assumption.
The first (and simplest) result consists of upper bounds on the  wave front set of the boundary trace of the wave kernel (\S \ref{WF}, \eqref{WF1} and \eqref{WF2}.)
The second is the study of the singularity at $t = 0$ of the Cauchy data of the wave group. This uses microlocal parametrices
for the wave group and at this time they are only available when $\partial M \not= \emptyset$ in the case of concave boundary.
The third result considers long time singularities of the Cauchy data Weyl sums due to loops at points on the boundary.

\subsubsection{Singularity at $t = 0$ of Cauchy data of the wave group}

The main novelty  in the proof of Theorem \ref{SoZthm} (relative to \cite{SZ1} involve the diffraction effects of
a concave boundary. A key step is to determine the singularity at $t = 0$ of  
\begin{equation}\label{Sft} \begin{array}{lll} S_q(t): & = &  E_B^b(t, q, q)
=  \sum_{j} \cos t \lambda_j \left| \phi^b_j(q) \right|^2, \;\; q \in \partial M.
\end{array} \end{equation}
For the proof of Proposition \ref{OpaPTWEYL} one constructs a similar distribution with the cutoff Cauchy data.
The singularities of  $E_B^b(t, q, q)$ are due to geodesic loops at $q$, and so there is  an isolated singularity at $t = 0$ for any type of boundary.  We  prove that near $t = 0, S_q(t)$ is a classical co-normal (or,  Lagrangian) distribution, i.e.  that the cutoff Fourier transform
$\fcal_{t  \to \lambda} \hat{\rho}(t) S_q(t)$ is a classical symbol when
$\hat{\rho}(t) \in C_0^{\infty}(- \epsilon, \epsilon)$.

\begin{prop} \label{MELROSE} The diagonal boundary trace of the Neumann wave kernel, $E_N^b(t, q, q)$ or Dirichlet
wave kernel $E_D^b(t, q, q)$  has a classical  co-normal singularity
at $t = 0$. In the Neumann case the leading singularity is  of order $t^{-n}$ (as in the boundaryless case) while in the Dirichlet case, it  is of order $t^{-(n +2)}$ (due to the two normal derivatives).  \end{prop}

 The main difficulty  is to prove that when $\partial M$ is concave, the singularity at $t = 0$ of the glancing part of $E_B^b(t,q,q)$ is normal, i.e. $(t D_t)^k E_B^b(t,q,q)$ has the same order of singularity for all $k$.\footnote{We follow the terminology of \cite{I80} in referring to such a singularity as `normal'.} The glancing parametrix consists of a number of terms in \eqref{MICRODECOMP}. 
In the following, we denote them by  $w_{G}$. We refer to Section \ref{PARA} for  definitions and background. 
  The normality of the singularity  is standard  for the hyperbolic part of the wave kernel, so
the main point is to prove the following Lemma:

\begin{lemma} \label{MLEMintro} On the diagonal of $\partial M \times \partial M$, the glancing part $w_{G}$ of the parametrix has a normal singularity at $t = 0$, i.e.
\begin{equation} \label{edef} w_{G}(t, q, q) = \int e^{i t \theta} e(t, \theta, q) d \theta, \end{equation}
where  $e(t, \theta, q)$  is a classical symbol in $\theta$ of order $n-1 $ when $ q \in \partial M$. 

Moreover, if the microlocal cutoff to the glancing set has support in the $\epsilon$-neighborhood of the
glancing set, then   $$e(t, \theta, q) =  c_{\epsilon}(q)\; \theta^{n-1} \; \rm{mod}\; S^{n-2}, \rm{where} \; c_\epsilon(q)=O(\epsilon)  \; \rm{as} \; \epsilon \to 0.$$

 \end{lemma}
 
The last statement says that as $\epsilon \to 0$, the contribution of
grazing rays to the singularity at $t = 0$ decays to zero, so that, dually
there is no enhanced mass of eigenfunctions in grazing directions (as stated in Proposition \ref{OpaPTWEYL}). The proof of Lemma \ref{MLEMintro} is given
in Section \ref{GLSING}.

 It  seems `intuitive'  that the singularity 
at $t = 0$ should be of a  very simple type, viz. a co-normal one. However,
 `the geometric theory of diffraction' does not provide much intuition about
the singularity \cite{LK59}. One may imagine that the singularity at $t = 0$ has only an infinitesimal period of time to reflect the  `creeping' nature of the waves and  existence of diffractive rays.  But diffractive rays do have
a serious technical impact;  as is shown in Section \ref{GLSING}, the diagonal of the  glancing part of the wave group makes a non-zero contribution involving Airy functions when restricted to the boundary. 

The proof of Lemma \ref{MLEMintro} and Proposition \ref{MELROSE}
uses
the massive machinery of the  Melrose-Taylor diffractive parametrix.
The parametrix does simplify considerably when restricted to the boundary, 
but the Airy factors remain. 

It may be helpful to explain in advance what we use from \cite{M} and how it relates to Melrose's proof of the small $o(\lambda^{n-1})$ remainder estimate for the Weyl law. 
The proof of Lemma \ref{MLEMintro} and Proposition \ref{MELROSE}  uses   the  parametrix for the wave
group in the case of  concave boundary \cite{M, M75,MT,Tay1,Tay2}. We adapt 
Melrose's proof of the Weyl law for manifolds with concave
boundary to obtain the pointwise Weyl laws. In fact, they
are  simpler than the integrated  Weyl laws  of \cite{M}.
The key calculation in Lemma \ref{MLEMintro} (see also Lemma \ref{MLEM}) is based on a modification of Melrose's ingenious proof of normality of the singularity at $t = 0$ in the interior case.  
Thus, a good deal of the content of Sections \ref{PARA} and \ref{GLSING}  consists in reviewing 
the relevant  results and constructions  of  \cite{M}. Indeed, our original idea was that it should be simple to extract from \cite{M}  the remainder estimates we need in  the asymptotics
of $ \Pi^b_{[0,\lambda]}(q,q)$.   In practice, the extraction requires a substantial  amount of material from \cite{M} and implicitly from the 
background article \cite{M75,Tay1} that we have to review in this
article.

\subsection{Examples of non-positively curved  surfaces   with concave boundary}

A special but important case of manifolds with concave boundary are 
 those of  non-positive curvature, obtained by removing some disjoint convex `obstacles' from  a non-postively curved manifold  $X$.  Thus,
\begin{equation} \label{M} M= X \backslash \bigcup_{j = 1}^r \ocal_j,  \end{equation}
where $\ocal_j$ are  embedded   non-intersecting
geodesically  convex domains (or `obstacles')
$\ocal_j$.  In the case where $X$
is a flat
torus or a square, such a billiard is often  called a  Lorentz-Sinai dispersing billiard. We refer to  \cite{CM} (see also
\cite{JZ}) for background
and references to the literature. Billiards on  $(M, g)$ of the form \eqref{M}  never have  self-focal points. Self-focal points $q$ are necessarily self-conjugate, i.e. there
exists a broken  Jacobi field along a geodesic billiard loop at $q$ vanishing at both endpoints. But as shown 
in  \cite{JZ}, non-positively curved dispersive billiards
do not have conjugate points.  Theorem \ref{SoZthm} applies to these examples.




\subsection{\label{EXAMPLEINTRO} Polar caps on spheres}   A family of simple examples of Riemannian manifolds with
concave boundary is given by certain complements of polar caps on standard spheres  $S^n$. 
Let   $S^n_r : = S^n \backslash B_r(p)$ be the complement
 of a `polar cap' of radius r.  For $r < \pi/2$, $S^n_r$ is a
concave domain in $S^n$. When $r = \frac{\pi}{2}$, $S^n_r$ becomes an upper hemisphere with totally geodesic (hence
weakly concave)  boundary.

The  hemisphere $S^n_+$  gives an example of a billiard with self-focal points at the boundary. Every geodesic leaving a point
$q \in \partial S^n_+$ returns to $\partial S^n_+$ at the point $-q$  at time $\pi$ , then reflects from the boundary and returns to $q$ at time $2 \pi$.
The reflected geodesic is the mirror image of the second half  of
the corresponding great circle on $S^n$. If $(S^2, g)$ is a convex surface of revolution for which $x \to -x$ is an isometry, then the same is true
for each hemisphere bounded by the equator.

      For the hemisphere, the Cauchy data of   eigenfunctions saturate the remainder bounds and sup norm of Corollary \ref{REMJUMP}. We prove this in Section \ref{EXAMPLESECT} but 
sketch the idea here in the case of Neumann eigenfunctions.
The  Neumann eigenfunctions of the upper hemisphere are restrictions of  even eigenfunctions of $S^n$ under the 
map  $x_n \to - x_n$, while the Dirichlet eigenfunctions
are restrictions of the odd ones.   The extremal Neumann eigenfunctions are restrictions of zonal spherical harmonics with pole on the 
boundary;  zonal means rotationally invariant for the axis through the pole (and its anti-pode).    When $ r < \frac{\pi}{2}$,
so that the boundary is strictly concave, the sup norm bounds on boundary traces are not achieved.  When $r > \frac{\pi}{2}$
the boundary is convex, and again the sup norm bounds on Cauchy data of eigenfunctions are not achieved.  
 All types of extremal behavior (i.e. for all $L^p$ norms)   occur for the hemisphere.   

 Dirichlet or Neumann eigenfunctions on complements of polar caps can be constructed 
using separation of variables. The radial factors are given by Legendre functions that may be singular at the poles. 
An example of an eigenfunction is the Green's function $G(\lambda, x, q)$ with pole $q$ at the north pole of $S^n_r$
with respect to a choice of maximal abelian subgroup of $SO(n + 1)$. The Green's function satisfies
$(\Delta + \lambda^2) G(\lambda, x, q)  = 0$ in $S^n_r$ since the $\delta_q$ lies outside the domain. It is rotationally
invariant and therefore satisfies the Dirichlet boundary condition if $\lambda$ is chosen to that $G(\lambda, r, q) = 0$
and satisfies Neumann boundary conditions if $\partial_r G(\lambda, r, q) = 0$. However, it is never an extremal
eigenfunction.

\subsection{A question}

The  example of polar caps suggests that a necessary condition that the bounds of Corollary \ref{REMJUMP} are saturated is that $\partial M$ is totally geodesic. In other words, 
  we ask if the sup norm bounds are ever achieved if the boundary
is strictly concave. 
Note that  concavity of the boundary implies non-existence of self-focal points on the boundary if the curvature is $\leq 0$.

\subsection{Acknowledgements}

We thank Richard Melrose for comments on the article. We also thank S. Ariturk and  the three referees  for
 helpful  corrections of notational ambiguities and typos as well as for
 discussions of \cite{M}. In particular we thank  J. Galkowski for guiding
 us through the analysis of orders of amplitudes and symbols (see \cite{Gal}
 for the analysis in a closely related problem).

\section{\label{WF} Wave front set bounds on Cauchy data of the wave group   }

The sup norm bounds of Proposition \ref{PTWEYL} and Theorem \ref{SoZthm}   are derived from an analysis of the singularities of the
boundary trace of the wave kernel along the diagonal,
\begin{equation} \label{BTW} E_B^b(t, q, q) =  \sum_{j = 1}^{\infty}  \cos (t \lambda_j)
|\phi_j^b(q)|^2. \end{equation}

In this section we calculate the wave front set of the boundary restriction  (Cauchy data) of the wave group with Dirichlet
or Neumann boundary conditions. The results were also stated and used in \cite{JZ} and \cite{Z4}.

\subsection{Wave front set of the wave group}

We denote by
\begin{equation} \label{EBSB} E_B(t) = \cos \big( t \sqrt{ -
\Delta_{B}}\;\big), \;\;\; \mbox{resp.}\;\;S_B(t) = \frac{\sin
\big(t \sqrt{- \Delta_{B }}\big)}{\sqrt{ -\Delta_{B }}}
\end{equation} the even (resp. odd) wave operators  $(M, g)$ with boundary conditions
$B$. The  wave group  $E_B(t)$  is the
solution operator of the  mixed problem
$$\left\{ \begin{array}{l} \bigl(\frac{\partial^2}{\partial t^2} - \Delta) E_B(t, x, y)=0, \\ \\ \quad E_B(0,x, y)=
\delta_x(y),\;\;\ \frac{\partial}{\partial t}  E_B(0,x, y)= 0,
\;\; x, y \in M;
\\ \\ B E_B(t, x, y) = 0,\;\; x \in \partial M \end{array} \right. $$

The wave front sets of $E_B(t, x, y)$ and $S_B(t, x, y)$ are determined by the    propagation of
singularities theorem of \cite{MSj}  for the mixed Cauchy Dirichlet (or Cauchy Neumann)
problem for the wave equation. We  recall from   \cite{HoI-IV} (Vol.
III, Theorem 23.1.4 and Vol. IV, Proposition 29.3.2) that
\begin{equation} \label{WFEB} WF (E_B(t, x, y)) \subset \bigcup_{\pm} \Lambda_{\pm},  \end{equation} where $\Lambda_{\pm} = \{(t, \tau,x, \xi, y, \eta):\; (x, \xi)= \Phi^t(y, \eta), \;\tau =\pm |\eta|_y\} \subset T^*(\mathbb R \times \Omega \times \Omega)$ is the graph of the generalized (broken) geodesic flow, i.e. the billiard flow $\Phi^t$. The same is true for $WF(S_B)$. As mentioned above, the
broken geodesics in the setting of \eqref{M} are simply the geodesics of the ambient negatively curved surface,
with the equal angle reflections at the boundary; tangential rays simply continue without change at the impact.

\subsection{Restriction of wave kernels to the boundary}

The first tool in the proof of Theorem \ref{SoZthm}   is the analysis of the  restriction of the Schwartz  kernel $E_B(t, x, y) $ of $\cos t \sqrt{-\Delta_B}$
to $\mathbb{R} \times \partial M \times \partial M$ and further to
$\mathbb{R} \times \Delta_{\partial M \times \partial M}$, where $ \Delta_{\partial M \times \partial M}$
is  the diagonal  of $\partial M \times \partial M$.  We denote by $d q$ the surface measure on the boundary
$\partial M$, and by $r u =
u|_{\partial M}$ the trace operator. We denote by $E_B^b(t, q', q) \in \mathcal{D}'( \mathbb{R} \times \partial M
\times \partial M)$ the following boundary traces of the Schwartz kernel $E_B(t, x, y)$ defined in (\ref{EBSB}):

\begin{equation} \label{b}
E_B^b(t, q', q) =
\left\{\begin{array}{ll}
r_{q'} r_{q} \partial_{\nu_{q'}} \partial_{\nu_q} E_D(t, q', q) , & \;\; \;\;\; \mbox{Dirichlet}\\ \\
r_{q'} r_{q}\;  \; E_N(t, q', q), & \;\; \mbox{Neumann}
\end{array} \right. \end{equation}
The subscripts $q', q$ refer to the variable involved in the
differentiating or restricting.

\subsection{Wave front set of the restricted wave kernel}

%

The first and simplest piece of information is the wave front set of \eqref{BTW}. It follows from \eqref{WFEB}, and from
  standard results on pullbacks of wave front sets under maps,  that the  wave front set of  $E_B^b(t, q, q')$ consists of co-directions of broken trajectories which begin and end on $\partial M$.
That is,
\begin{equation} \label{WF1}
\begin{array}{lll}  
 WF ( E^b_B(t, q, q'))
&\subset & \{(t, \tau, q,
\eta, q', \eta') \in T^* \R \times  B^* \partial M \times B^* \partial M: \\&&\\&&[\Phi^t(q, \xi(q, \eta))]^T = (q', \eta'), \;
\tau =  |\xi| \}.
\end{array}
\end{equation}
Here, the superscript $T$ denotes the tangential projection to $B^* \partial M$.
 We refer to Section 2 of \cite{HeZ} for an extensive discussion.
It follows from \eqref{WF1} that  \begin{equation} \label{WF2} \begin{array}{lll} 
 WF (E_B^b(t, q, q'))
& \subset &  \{(t, \tau, q, \eta, q, \eta') \in T^* \R \times  B^*_q \partial M \times B^*_q \partial M: \\&&\\&& [\Phi^t(q, \xi(q,
\eta))]^T = (q, \eta'),\;\; \tau =  |\xi(q, \eta)|\}. \end{array}  \end{equation}
Thus, for $t \not= 0$, the  singularities of the boundary  trace 
$E_B^b(t, q,q)$ at $q \in \partial M$  to broken
bicharacteristic
 loops based at $q$ in
$\overline{M}$. When $t = 0$ all inward pointing co-directions belong to the wave front set.

\begin{remark} One of the principal  features of the boundary trace  $E_B^b(t, q, q)$ along the diagonal is that
the singularity at $t = 0$ becomes uniformly isolated from other
singularities, while the interior kernel $E_B(t, x, x)$ has
singularities at $t = 2 d(x)$ arbitrarily close to $t = 0$. Here, $d(x)$ is the distance from $x$ to $\partial M$. \end{remark}

\section{\label{PARA} Parametrices for the wave group on manifolds  with concave boundary}

To prove Proposition \ref{PTWEYL}  and Theorem \ref{SoZthm}, we need more than wave front set bounds
on the Cauchy data of  Dirichlet or Neumann wave kernels $E_B(t, q, q')$ along the boundary. We will also
need to have explicit control over the singularity  of $E_B(t, q, q)$ at $t = 0$ and $q \in \partial M$, and 
approximate control over certain microlocalizations $a_h(q, D_q) E_B(t, q, q') |_{q = q'}$. To obtain such control,
we use the Melrose-Taylor  parametrix for $E_B(t, x, y)$ for wave kernels on manifolds with concave boundary  \cite{MT,M,Tay1,
Tay2}.
We only need the parametrix along  $\partial M \times \partial M$, and this is a substantial simplification. 
We follow \cite{M} closely in our analysis of the parametrix and the singularity at $t = 0$.

\subsection{\label{KSECT} Kirchhoff formula} In this section we review the classical Kirchhoff formula for the fundamental
solution of the wave equation. It is implicitly used in \cite{M} and there in  Section \ref{MD}. The main point is that it illustrates the crucial role of the Neumann
operator in the construction of the Dirichlet or Neumann wave kernel.  Background on Kirchhoff's formula in varying degrees of generality can be found in \cite{Sob} (Lecture 14), 
Taylor's notes \cite{Tay3} (section 2),  Theorem 4.1.2 of \cite{Fr}, \cite{F} (page 10). The Kirchhoff formulae show that
that the parametrix construction for the wave kernel can be reduced to that for the (Dirichlet-to-) Neumann operator
$\ncal$ and its inverse. This clarifies the relation between the parametrix constructions for Dirichlet vs. Neumann
boundary conditions. It is implicitly used in Section 2 of \cite{M} and we digress to 
explain to give some of the relevant background.

We assume as above that $(M, g) \subset (X, g)$ where $(X, g)$ is a compact $C^{\infty}$ Riemannian manifold
without boundary. We denote the Schwartz kernel
of $\cos t \sqrt{-\Delta_X} $ by $E_X(t, x, y)$. 
The Kirchhoff formula is  a  layer potential formula for the solution of a certain mixed Cauchy and boundary problem
for the wave equation  $\Box u = (\frac{\partial^2}{\partial t^2} - \Delta_g) u = 0$ on $X \times \R$,
\begin{equation} \label{MIXED} \left\{ \begin{array}{l}  \Box u = 0, \; \rm{on}\; M \times \R \;\; \\ \\
u = g \;\; (\mbox{Dirichlet})\;\; \mbox{or} \;\; \partial_{\nu} u = h \; (\mbox{Neumann}) \; \mbox{on} \; \partial M \times \R\\ \\
u = 0, \;\; t \leq 0 \end{array} \right. \end{equation}
where $\partial_{\nu}$ is the outer inner unit normal to $M$.
The  {\it  Kirchhoff formula } is the Green's formula,
$$ u(t, x)  =   \int_0^t \int_{\partial M} (\partial_{\nu_y} G(s, t, x, y) )u(s, y)  - G(s, t, x,  y)
\partial_{\nu_y} u(s, y) d S(y) ds,  $$
where $G$ is the forward fundamental solution of $\Box$ on $X$,
$$
G(t, s, x, y) = H(t - s)  \frac{\sin (t - s) \sqrt{- \Delta_X}}{\sqrt{- \Delta_X}}.
 $$ Here, $H(t) = {\bf 1}_{t \geq 0}$ is the Heaviside step
function. The  complication is that the formula involves both $ u$ and $\partial_{\nu } u$ on $\partial M$,
but only half the data is prescribed by the equation and the other half
requires the use of the (Dirichlet-to-) Neumann operator $\ncal$.\footnote{We follow the terminology in \cite{M} of calling $\ncal$ the Neumann operator.} We recall that, 
given the data $u |_{\partial M \times \R}$, $$\partial_{\nu_y} u(s, y) = \ncal (u |_{\partial M \times \R}).$$

We now apply the Kirchhoff formula to obtain expressions for the Dirichlet, resp. Neumann, wave kernels of $(M, g)$.
They have the form $$E_B(t, x, y) = E_X(t, x, y) + C_B(t, x, y)$$
where $E_X$ is the `free' wave kernel of $(X, g)$ and $C_B$ is the compensating kernel, designed so that the sum
satisfies the boundary condition. Thus, $$\left\{ \begin{array}{l}  \Box C_B(t, x, y)  = 0, \;\; \\ \\
C  = - E_X  \; \mbox{Dirichlet}\;\; \mbox{or} \;\; \partial_{\nu} C = - \partial_{\nu} E_X \; \mbox{Neumann} \; \mbox{on} \; \partial M \times \R\\ \\
C_B = 0, \;\; t \leq 0. \end{array} \right. $$

\begin{lemma} In the Neumann case, 
$$\begin{array}{lll} E_N(t, x, y)  & = & E_X(t, x, y) +    \int_0^t \int_{\partial M} (\partial_{\nu_y} G(s, t, x, q') ) \ncal^{-1} \partial_{\nu_y} E_X(s, q', y)  \\ &&\\ &  & - G(s, t, x,  q')
\partial_{\nu_y}  E_X (s, q', y)) d S(q') ds. \end{array}$$

In the Dirichlet case,  

$$\begin{array}{lll} E_D(t, x, y) &  = & E_X(t, x, y) +   \int_0^t \int_{\partial M} (\partial_{\nu_y} G(s, t, x, q') )  E_X(s, q', y)  
 \\ &&\\ &  &  - G(s, t, x,  q')
\ncal  E_X(s, q', y) d S(q') )ds. \end{array} $$

\end{lemma}
As the Lemma shows, the main difference between the Neumann and Dirichlet cases is that one needs to
construct $\ncal^{-1}$ in the Neumann case and $\ncal$ in the Dirichlet case. See \cite{Tay2} (section X.5), \cite{F} or pages 262- 263 of \cite{M} for
more on this representation and on the Neumann operator and its inverse. Since there is a parametrix for $G(s, t, x, y)$ the main problem is to construct a parametrix for $\ncal$. We do not
review more of the theory but head straight for Melrose's analysis of the parametrix in the case of concave boundary.

\subsection{\label{MD} Microlocal decomposition}

 As on
the bottom of page 261 of \cite{M},  we introduce Fermi normal (geodesic) coordinates  $z = (x,y)$ near $\partial M$,
where $x$ is the normal variable, giving a   defining function of $\partial M$ and $y$ gives coordinates on
$\partial M$.
Then  $\Delta = \frac{\partial^2}{\partial x^2}  + Q(x, y, \partial_y)$, where
the symbol $q(x, y, \eta)$ is the induced Riemannian norm on the level
sets of $x$.

We recall (see \cite{M,MT, HoI-IV,F})  that there exist two regions of $T^* \R \backslash \{0\} \times T^* \partial M$: 
\bigskip

 \begin{itemize}

\item the hyperbolic region(s) $H_{\pm}$ where $\tau^2 > q(0, y, \eta)$, 
\bigskip

\item the elliptic region $E$ where $\tau^2 < q(0, y, \eta)$, 
\bigskip

\item which are separated by the  glancing hypersurfaces $G_{\pm}$ where $\tau^2 = q(0, y, \eta)$, $\pm \tau > 0$. 

\end{itemize}
\bigskip

As discussed on  \cite[page 265]{M}, a   solution to the initial-boundary  value problem
$$\left\{ \begin{array}{l} \Box w = 0, \\ \\
\partial_{\nu} w |_{\partial M} = 0, \\ \\
w(0, z) = w_0(z), \; \partial_t w (0, z)  = 0, \end{array} \right. $$
can be obtained by first solving the initial value problem and  then adding compensating term  so that the boundary condition is satisfied. We described this  method
in Section \ref{KSECT}.

A solution to the initial value problem 
can be decomposed as \begin{equation} \label{udef} u = u_T^{\pm} + u_G \end{equation} (transversal plus glancing terms) where $u_G = u_G(t, x, y)$ has the form
\begin{equation} \label{uG} u_G = \int e^{i \Phi(x, t, y, s, \tau, \eta)  - i \Phi(x', 0, y', r, \tau, \eta)} c_G(x, t, y, s, r, \tau, \eta) d \tau d \eta
dr ds, \end{equation}
where the phase has the form,
\begin{equation} \label{PHI} \Phi(x, t, y, s, \tau, \eta) = \alpha(x, t, y, \tau, \eta) + s |\eta|^{1/3} \beta(x, y, \tau, \eta) + \frac{1}{3} s^3 |\eta|, \end{equation}
and where  $c_G$ is a first order classical symbol suppored in $|s|, |r| \leq \epsilon,  |\tau^2 - |\eta|^2| \leq \epsilon \tau^2$ for some $\epsilon > 0$.
The transversal term $u_T^{\pm}$ is a standard Fourier integral operator.

It is shown in  \cite[p. 266-267]{M}  that the  Dirichlet or Neumann cosine wave kernel $w(t, q, q')$  can be
constructed in this way as  a sum of terms
$$w = w_1 + w_2^+ + w_2^- $$
with 
\begin{equation} \label{MICRODECOMP} \left\{ \begin{array}{l} w_1 = (u_G)_c + w_{1,G} + w^+_{1, H} + w^-_{1, H} + w_{1, E} \\ \\
w_2^{\pm} = (u_H)_c + w^{\pm}_{2,G} + w^{\pm, +}_{2, H} + w^{\pm, -}_{2, H} + w^{\pm}_{2, E}. \end{array} \right.\end{equation}
\bigskip

Here $u_c $ is the cutoff of  the kernel $H(t) u(x, t, y; x', y')$ to $x < 0$ where $x$ is the normal coordinate
to $\partial M$.  
The hyperbolic and elliptic terms (with subscripts H, E)  are of a standard kind (although the elliptic term is a Fourier integral operator with
complex phase). In generalizing the arguments of \cite{SZ1,SZ2}  to the boundary case, these terms do not require
any essential modification and we therefore  suppress them in the exposition.

As mentioned above, the main difference between the Dirichlet and Neumann
cases is that one needs to construct the Neumann operator $\ncal$  as a Fourier-Airy integral operator in the Dirichlet case and its inverse $\ncal^{-1}$ in the Neumann case. 

In the following we need some notation pertaining to the Airy function
$Ai(s)$. Following \cite{MT} we  set 
\begin{equation} \label{Apm} A_{\pm}(s) = Ai(e^{\pm \frac{2 \pi i}{3} } s). 
\end{equation}
See   Section \ref{AIRYAPP} for its properties.

\subsection{\label{GTSECT} Glancing terms}

The novel feature of the concave boundary case is the analysis of the glancing terms $w_{1, G}, w_{2, G}^{\pm}$. 
In \cite[(3.1)]{M}, a parametrix for the glancing part is given in the form,
with $z = (x, y), z'' = (x'', y'') \in M$,

\begin{equation} \label{w1G21} \begin{array}{lll} w_{1, G}(t, z, z'')  &=& \int \int_0^{\infty}   e^{i \alpha(x, t, y, \tau', \eta') - i t' (\tau' - \tau) \mu 
+ i s \beta_0' |\eta'|^{\frac{1}{3}} + i \frac{s^3}{3} |\eta'|  } \\&&\\&& e^{- i \alpha(x'', 0, y'', \tau, \eta')  - ir \beta''
|\eta'|^{\frac{1}{3}} - i \frac{r^3}{3} |\eta'| }
\frac{a  A'_{\pm}(\beta) + b A_{\pm}(\beta) }{A_{\pm}(\beta_0) }\\ &&\\&& 
 dt'  d s dr d \tau d \tau'  d \eta'. \end{array} \end{equation}
 Here, the half-line integral $\int_0^{\infty}$ refers to $dt'$ and $A_{\pm}$ is
 defined in \eqref{Apm}. Note that there are two $w_{1,G}$ corresponding
 to the choice of $\pm$ but as in \cite{M} we suppress this in the notation
 since the two cases are similar. In Section \ref{DER} we simplify the integral
 and restrict it to the boundary.

The  $s, r$ variable  arose  in \cite[(2.16)-(2.17)]{M} in the phase \eqref{PHI}
defined near $\tau = \pm |\eta|$. 
Here,   $x = 0$ defines $\partial M$ and the rest of the notation is defined as follows:
\bigskip

\begin{enumerate}
\item The `phases'  $\alpha, \beta$ are real $C^{\infty}$ functions, homogeneous of degree $1$ resp. $\frac{2}{3}$ in $(\tau, \eta)$.
Moreover,  $\alpha(x, t, y, \tau, \eta) $ is linear in $t$, invariant under $(t, \tau) \to (-t, - \tau)$ and
$\det (\partial_{t, y} \partial_{\tau, \eta} \alpha(0, t, y, \tau, \eta) \not= 0$ on $\tau^2 = |\eta|^2$ (\cite{M} (2.8), (2.10)).
Hence 
\begin{equation}\begin{array}{l}  \label{alpha} - \alpha(0, t', y', \tau', \eta') + \alpha(0, t', y', \tau, \eta) \\ \\= \langle y' + t' g, \eta - \eta'\rangle + t'(\tau - \tau') \mu \end{array} \end{equation}
where $g, \mu$ are $C^{\infty}$ and homogeneous of degree zero with $\mu \tau > 0$ and $\langle \cdot, \cdot \rangle$ is the inner product.
\bigskip

\item $\beta(x,y,  \tau, \eta)$ is independent of $t$ and along the boundary is given by 
\begin{equation} \label{b0} \beta_0: = \beta(0, y, \tau, \eta) = (\tau^2 - |\eta|^2) |\eta|^{-\frac{4}{3}},\;  \end{equation}
see (\cite[(2.9) and above (3.2)]{M}. Note that $\beta$ is homogeneous of degree $2/3$, and that $\beta_0 > 0$ in the hyperbolic set.\footnote{The 
factor $ |\eta|^{-\frac{4}{3}}$ is missing in \cite[(2.9)]{M} but corrected in 
\cite[(3.2)]{M}. }

  When evaluating $\beta, \beta_0$ at  primed coordinates, we abbreviate the value of $\beta$ by priming it as follows:
\begin{equation} \label{beta} \beta_0' = (\tau^2 - |\eta'|^2)  |\eta'|^{-\frac{4}{3}}, \;\;\;
\beta'' = \beta(x'', 0, y'', \tau, \eta'). \end{equation}  \bigskip

\item $\Phi(0, t, y, s, \tau, \eta) = \alpha(0, t, y,  \tau, \eta) + s |\eta|^{\frac{1}{3}} \beta (0,  y,  \tau, \eta)
+ \frac{1}{3} s^3 |\eta|. $ \cite[(2.16)]{M}.\bigskip

\item $a, b$ are not classical symbols but are supported near $s = r = 0, \tau^2 = |\eta|^2$ and have the following  form \cite[(2.14)]{M}.
\begin{equation} \label{2.14} \begin{array}{lll}  \tilde{a}_G & \sim & \sum_{j \geq 0} a_{j + 1}(x, t, y, t', y', \tau, \eta) \Phi_{\pm}^{-1, j}(\beta(0, \tau,\eta)) \\&&\\&& + a_0(x, t, y, t', y', \tau, \eta) \end{array}\end{equation}
(with different coefficients for $a$ resp. $b$) where $a_j, b_j$ are classical symbols and
$\Phi_{\pm}^{-1}(z) = \frac{A_{\pm}(z)}{A_{\pm}'(z)}, $ and with 
$\Phi_{\pm}^{-1, j}(z) = \frac{d^j}{dz^j} \Phi_{\pm}(z).$  See Section \ref{ORDERsect}
for a discussion of the orders of the terms.

\bigskip

\item Recalling \eqref{udef},  $u_G$ is given in  \cite[(2.17)]{M} and $u_c$ is obtained by cutting off $u' := H(t) u(x,t,y; x', y') $ in $x\leq 0$.

\end{enumerate}
\bigskip

The corresponding term(s) for $w_2^{\pm}$ have a similar form with $\Phi$ replaced by $\phi_{\pm}(0, z', \zeta)
= z' \cdot \zeta$ (\cite[(2.1)]{M}).  The cutoff term $(u_G)_c$ is also handled in the same way. Hence we focus
on $w_{1, G}$.

\subsection{\label{DER} Simplification of \eqref{w1G21}}

 Following  \cite[p. 268-269]{M} and starting from \eqref{w1G21}  (\cite[(3.1)]{M}), one simplifies the integral by 
using statonary phase to 
eliminate the $(t', \tau)$ integral  (changing the order of the amplitude by $-1$) to get the
forms  $w_{1,G} = w^{(1)}_{1,G} + w^{(2)}_{1,G}$ of \cite[(3.2)-(3.3)-(3.4)]{M}.

After restriction of $w_{1,G}$ \eqref{w1G21}  to points $(q, q'') \in \partial M \times \partial M$, we have
\begin{equation} \label{w1} \begin{array}{lll} w_{1, G}(t, q, q'')  &=& \int \int_0^{\infty}   e^{i \alpha(0, t, q, \tau', \eta') - i t' (\tau' - \tau) \mu 
+ i s \beta_0' |\eta'|^{\frac{1}{3}} + i \frac{s^3}{3} |\eta'|  } \\&&\\&& e^{- i \alpha(0, 0, q'', \tau, \eta')  - ir \beta''
|\eta'|^{\frac{1}{3}} - i \frac{r^3}{3} |\eta'| } \frac{a  A'_{\pm}(\beta_0) + b A_{\pm}(\beta_0) }{A_{\pm}(\beta_0) }\\ &&\\&& 
  d s dt' d \tau dr  d \tau'  d \eta'. \end{array} \end{equation}
 As above, the $dt'$ integral is only over $\R_+$.
Following  \cite[p. 268-269]{M} and starting from \eqref{w1G21}  (\cite[(3.1)]{M}), one simplifies the integral by 
using statonary phase to 
eliminate the $(t', \tau)$ integral  (changing the order of the amplitude by $-1$).  

 In fact, we are only
 interested in the diagonal of the kernel, which is given by
\footnote{There is a typo in the second $i$  in \cite[(3.3)]{M} in $is^3$,
$ir^2$ should be $ir^3$ and $ir \bar{\beta}$ should be $-i r \bar{\beta} |\eta'|^{1/3}$. }  
\begin{equation} \label{w1G} \begin{array}{lll} w^{(1)}_{1,G}(t, q, q)  & = & \int_{\tau' (s - r) \leq 0} e^{i \alpha(0, t, q, \tau', \eta')
- i \alpha(0, 0, q, \tau', \eta') + is \beta_0 |\eta'|^{\frac{1}{3}} + is^3 \frac{|\eta'|}{3} - i r \beta_0 |\eta'|^{1/3} - ir^3 |\eta'|/3} \\ &&\\
& &
\frac{a  A_{\pm}(\beta_0) + b A'_{\pm}(\beta_0) }{A_{\pm}(\beta_0) }  \;\; ds dr \;  d\tau' d \eta'. \end{array} \end{equation}
\subsection{\label{ORDERsect} Homogeneities of amplitudes and symbols}
In this section, we review the orders of the various amplitudes
appearing in the glancing term \eqref{w1G21} or the simplified form
on the boundary \eqref{w1G}.  For the background on Airy functions
and Airy quotients we refer to the Appendix (Section \ref{AIRYAPP}), which 
is essentially from \cite{MT} Appendix A.3. We denote symbols of order
 $m$ in some Hormander symbol space $S^m_{\rho, \delta}$ simply
 as belonging to $S^m$.
 
 In calculating orders of amplitudes we use only two facts: (i) the symbolic
 properties of Airy quotients $\Phi_{\pm}$ in Section \ref{AIRYAPP}; and
 (ii) the fact that the glancing parametrices $w_{1,G}, w^{\pm}_{2,G}$
 are (for fixed $t$) Fourier-Airy integral operators of order $0$, since they are parts of
 $\cos t \sqrt{-\Delta}$ in which one microlocalizes to an $\epsilon$ neighborhood of the glancing set. The microsupport of the  parametrix includes
 a sector of hyperbolic points and so the amplitudes must have the same
 order as in the hyperbolic terms of the parametrix.

\begin{rem} \label{WARN}The orders of the amplitudes stated at the end  of this section refer to \eqref{w1G21} or equivalently
to \cite[(3.1)]{M}. But the order analysis in \cite{M} pertains to \cite[(2.22)]{M}. So we track
the change in orders from \cite[(2.22)]{M} to \cite[(3.1)]{M} in this section.

Below,  in the proof of Lemma \ref{MLEM},  further  modifications are made to simplify \eqref{w1G21} or  \cite[(3.1)]{M}. The
new amplitudes are still denoted by $a,b$ (as in \cite{M}) but their
orders change in the modifications. The orders of the amplitudes of \eqref{w1G21} are stated here so that one can keep track of how the orders change in the course of the modifications.

We also warn that the roles of the coefficients $a,b$ in \cite{M} get reversed
in the course of the proof. \footnote{ $a$ is the coefficient of the primed term in   \cite[(2.22)]{M} for $w_{1,G}$ and \cite[(2.26)]{M} for $w^{\pm}_{2,G}$. 
Warning: the roles of $\tilde{a}_G$ and $\tilde{b}_G$ were switched in\cite[(2.22)]{M}.
In \cite[(2.14)]{M}, $\tilde{a}_G, \tilde{b}_G$ have roles consistent with
Remark \ref{WARN}. 
The roles are reversed below \cite[(3.1)]{M}.}
 We will always use $a$ for the coefficient
of $ \frac{A_{\pm}(\beta)}{A_{\pm}(\beta_0)}$ and $b$ for the coefficient
of $ \frac{A'_{\pm}(\beta)}{A_{\pm}(\beta_0)}$.
\end{rem}

  For the representation \cite[(3.1)]{M} it is said on  \cite[p. 268]{M},  that the amplitude has the form,
 \begin{equation} \label{Ass} As_{\pm}(\beta, \beta_0) = a \frac{A_{\pm}(\beta)}{A_{\pm}(\beta_0)} + b
 \frac{A_{\pm}'(\beta)}{A_{\pm}(\beta_0)}. \end{equation}
It is explained in \cite[(2.14)]{M} that the coefficients $a, b$ have the form \eqref{2.14}.
More precisely, one has symbol expansions,
\footnote{The orders of the amplitudes are apparently stated incorrectly in \cite
[(2.13)-(2.14)]{M} 
 }
 \begin{equation} \label{aGbG} \left\{ \begin{array}{l} \widetilde{a}_G \sim \sum_{j \geq 0} 
 a_{j+1}  \Phi_{\pm}^{-1, j}(\beta_0(\tau, \eta)) + a_0,\\ \\
  \widetilde{b}_G \sim \sum_{j \geq 0} 
 b_{j+1}  \Phi_{\pm}^{-1, j}(\beta_0(\tau, \eta)) + b_0, \end{array} \right. \end{equation}
 Here,
 $\Phi_{\pm}^{-1}(z) = \frac{A_{\pm}(z)}{A'_{\pm}(z)}$ and  $\Phi^{-1, j}$ is the jth derivative. As reviewed in Section \ref{AIRYAPP}, $\Phi^{-1}(\beta_0)
 \in S^{-\frac{1}{3}}, \Phi^{-1, j}(\beta_0) \in S^{-\frac{1}{3} - j}.$   
Indeed, the   Airy quotients $\Phi_{\pm}$ \eqref{Phipm} are classical symbols of order $\half$,
$$\Phi_{\pm}(\zeta) \sim \sum_{j \geq 0} a_j^{\pm} \zeta^{\half - \frac{3 j}{2}}, \;\; \Re \zeta \to \infty, $$
and $\Phi_{\pm}(\beta) \in S^{\frac{1}{3}}_{\frac{1}{3}, 0},$  since $\beta$ is homogenous of order $ \frac{2}{3}.$ 
For purposes of this article, only  the boundary restriction is relevant and then \eqref{Ass} becomes  \begin{equation} \label{ab1} As_{\pm} (\beta_0, \beta_0) = a \frac{A_{\pm}(\beta_0)}{A_{\pm}(\beta_0)} + b
 \frac{A_{\pm}'(\beta_0)}{A_{\pm}(\beta_0)} = a + b  \frac{A_{\pm}'(\beta_0)}{A_{\pm}(\beta_0)} ,\end{equation}
 where $a, b$ are classical symbols.   
  In the final form \eqref{ab2}, after various modifications,   $a \in S^1, \; b \in S^{2/3}$ and both terms have order 1.
  However, in this section we are discussing  the representations \eqref{w1G21} and \eqref{w1G}.
  
  \begin{lemma} \label{ORDERLEM} In the representation \eqref{w1G}, 
  $\frac{a  A'_{\pm}(\beta) + b A_{\pm}(\beta_0)}{A_{\pm}(\beta)}$ and \eqref{ab1}  are symbols of order 1. Hence $a$ has order 1 and $b$ has order $\frac{2}{3}.$

\end{lemma}


\begin{proof} Granted that $a,b$ have symbol expansions of the form \eqref{aGbG} and that $w_{1,G} $
is a Fourier-Airy operator of order $0$ we can read off the orders  from the integral representations
\eqref{w1G} (see  \cite[(3.1)-(3.2)-(3.3)]{M}).

The full amplitude is $\rm{As}(\beta, \beta_0)$ of \eqref{w1G21} before the boundary restriction  is with  respect to $dt' d\tau' d\eta' d \tau ds dr.$ After the $dt' d\tau$ integral it is with respect to $dr ds d \tau' d \eta'$. As in the hyperbolic
term the amplitude must have order $1$ to obtain an oscillatory integral representation for a Fourier-Airy
integral operator of order $0$. (In the more familiar expressions with only the phase variables
$d \tau' d\eta'$ of $T_q^* M$ the amplitude would have order zero. The order $1$ compensates for
the additional $dr ds$ integral).

\end{proof}

\begin{rem}
 
 In \cite[(2.14)]{M} it is said that   $a_j, b_j$ are classical symbols of orders $- \frac{1}{3} - \frac{j}{3},
- \frac{2}{3} - \frac{j}{3},$ respectively. The corresponding $\tilde{a}_G,
\tilde{b}_G$ refer to the oscillatory integral representation \cite[(2.13)]{M} for $v_G$ \cite[(2.5),(2.13)]{M} with phase variables $dt' d\tau dy' dy$. This
is used to construct the glancing parametrix $w_{1,G}$ \cite[(2.22)]{M}, which has
an amplitude of the form \eqref{Ass} with $a = b_G, b = a_G$ times
another classical symbol $d_G$ of order 2. In this expression, $a_G$ has  order -4/3 and $b_G$ has order $ -1$ and have expansions of type
\eqref{aGbG} (see  below \cite[(2.24)]{M}. There are other terms $w_{2, H}^{\pm}$ of
a similar form given in \cite[(2.26)]{M}. In the next expression \cite[(3.1)]{M}
for $w_{1,G}$, the factor $d_G$ is absorbed into the coefficients $a,b$.
\end{rem}

This finishes our review on the basic objects and notations in \cite{M}.

\subsection{Comparison of \cite{M} to other articles on diffraction.}

 Since the notation $\alpha, \beta$ for the phases does not seem to appear in any other article on diffraction problems, we pause to relate the notation  to that
 appearing in \cite{Fr76,FM77,M75,Tay2}. In \cite{M75}, the phases $(\alpha, \beta)$ of \cite{M} are denoted by $(\phi, \zeta)$. They are described rather
 explicitly in Proposition 6.3 of \cite{M75}. The same coupled eikonal equations
 for the two phases  had
 been analysed earlier by D. Ludwig \cite{L67}. The phase $\beta$ of \cite{M}
 is modeled after the phase of the Friedlander model  \cite{Fr76, FM77}. In those articles,  the  coordinates are denoted  $(x, y_n, y')$ corresponding to
 $(x, t, y)$ in \cite{M}. Let $(\xi, \eta_n, \eta')$ denote the symplectic dual
 coordinates, so that $\eta_n $ corresponds to $\tau$ and $\eta' $ to $\eta$ in \cite{M}. Then the phase $\zeta_0$  in  \cite[(2.2)]{FM77}  corresponding to $\beta_0$ in \cite{M} is (cf. \cite[(3.5)]{Fr})
 $$\zeta_0 (\eta) = \eta_n^{-4/3} (|\eta'|^2 - \eta_n^2). $$
 As in \eqref{b0} it is independent of $y_n (= t)$ and $y'$.
 The full phase of the Friedlander model corresponding to $\beta(x, y, \tau, \eta)$ in \cite{M} is
 $$\zeta(x, \eta_n, \eta') = \eta_n^{-4/3} (|\eta'|^2 -  (1 + x) \eta_n^2). $$
 Of course, the phase $\beta$ is much more complicated for $x >0$
 but is related to $\zeta$ by the canonical transformation to Friedlander
 normal form \cite{M76}, which is implicitly used in \cite{M}.
 
 The angular phase $\alpha$  corresponds to the phase   $\theta = t |\xi| + \psi(x, \xi, \eta)$ of \cite{Tay2}. In the case of the exterior of the unit ball in 
 $\R^n$, $\theta$ is closely related to the usual polar coordinates (see \cite{L67}).

The mixed Cauchy-boundary problem  for the wave group \eqref{MIXED}
 is rarely studied explicitly in  the literature on diffractive parametrices. To our knowledge, the first direct treatment of parametrices for  the propagator
$\exp i t \sqrt{-\Delta}$ for the exterior
of a convex obstacle (in $\R^n)$ is given by Farris in \cite{F}. The foundational articles \cite{M75,Tay1} do not directly address the propagator
(nor the related solution operators $\cos t \sqrt{-\Delta}$ or $\frac{\sin t \sqrt{-\Delta}}{\sqrt{-\Delta}}.)$ Farris used the techniques of \cite{Tay1} to analyze the propagator, but only for  Dirichlet boundary conditions. 
Neumann boundary conditions are more difficult because one has to 
invert the Dirichlet to Neumann operator $\ncal$. As far as we know, 
Melrose's article \cite{M} is the first to write down explicit expressions for
the diffractive parametrices for the mixed problem with Neumann boundary conditions. It does not seem to refer to any prior works with details on the 
parametrix construction in this case. The monograph \cite{MT} of Melrose-Taylor discussed the parametrix construction using the work of Farris for
Dirichlet boundary conditions and has a chapter on how to modify it for
Neumann boundary conditions;  the details on the latter are left to the reader.
We have not found subsequent articles which give an independent analysis
of the parametrix constructions. J. Galkowski has recently studied a related
diffractive parametrix problem and has corrected some of the statements
in \cite{M} on orders of amplitudes \cite{Gal}.

\section{\label{GLSING} Singularity at $t = 0$ of the Cauchy data of the glancing parametrix: Proof of Lemma \ref{MLEMintro} }

We now begin the proof of Proposition \ref{MELROSE} and the dual (and more precise) statements in  Propositions \ref{PTWEYL}  and \eqref{OpaPTWEYL}. As mentioned above, there is a minimal `loop of length'  $\ell$ of geodesic billiard loops in $M$ which begin and end on
$\partial M$. For $|t| < \ell$ the only singularity of $S_q(t)$ occurs at $ t = 0$. We use the microlocal decomposition
\eqref{MICRODECOMP}
into hyperbolic, elliptic and glancing sets.  
The normality of the singularity at $t = 0$ is standard for the elliptic and hyperbolic terms, 
so we only discuss the glancing terms
$(u_G)_c, w_{1, G}, w^{\pm}_{2, G}$.  Away from the glancing (tangential) directions,
$E^b_B(t, q, q)$ is a Fourier integral kernel whose symbol is computed in \cite{HeZ}. In this section, we consider the contribution of the glancing part
of the wave group to the singularity at $t = 0$ and prove Lemma \ref{MLEMintro}.


We follow the proof of the Weyl law for concave boundary in \cite{M}, but take boundary trace along
the diagonal instead of integrating $E_N(t, x, x)$ over the domain. The proof
is a relatively small modification of the proof of the Weyl law in \cite{M}, and we only explain the modifications and
not the entire proof.  In fact, the   proof of the result for the restriction
to the boundary is simpler.
However, we go into detail on orders of amplitudes
and on the final step using almost analytic extensions to prove symbolic
properties because they are omitted in  \cite{M} and are not always correctly
stated.

\subsection{Proof of Lemma \ref{MLEMintro} for Neumann boundary conditions}

We first consider the Neumann case, which is more difficult than the Dirichlet case because it is necessary to invert the Neumann operator $\ncal$.

 The key point is the following special case  of Lemma \ref{MLEMintro} where $w_G = w_{G,1}$. It is
 the most important term of the glancing parametrix. In Section \ref{ALLwG} we take into account
 the other glancing terms.   Together with standard facts on the 
 hyperbolic and elliptic terms, it  implies Proposition \ref{MELROSE}.

\begin{lemma} \label{MLEM} On the diagonal of $\partial M \times \partial M$,
\begin{equation} \label{edef2} w_{1, G}(t, q, q) = \int e^{i t \theta} e(t, \theta, q) d \theta, \end{equation}
where  $e(t, \theta, q)$  is a classical symbol in $\theta$ of order $ n-1 $ when $ q \in \partial M$.

Moreover, the  contribution of this glancing term
to the first two terms of the expansions of Proposition  \ref{SF2} are of order $O(\epsilon)$ as $\epsilon \to 0$.

 \end{lemma}

We recall that a classical symbol of order $\gamma$ is a polyhomogeneous
function possessing an  expansion,
$$e(t, q, \theta) \sim \sum_{j = 0}^{M} e_j(t, q) |\theta|^{\gamma -j} + R_M(t, q, \theta), \;(
D_{\theta}^m R_M(t, q ,\theta) \leq C(t, x) |\theta|^{\gamma -M-m}). $$
To prove Lemma \ref{MLEM}, we  use the analysis of $e(t)$ from   \cite[(3.5)]{M} but do not integrate in $z$. We first analyze
the difficult  term $w_{1,G}$.

We use the simplified  expressions \eqref{w1G} (see  \cite[ (3.2)-(3.3)-(3.4)]{M}) for  the boundary
values of $w_{1,G}$ and briefly
recall their derivation. It suffices for our purposes to restrict the kernels
to $\partial M \times \partial M$ and this simplifies their form considerably. 
 
At the boundary $x'' = 0$, $\beta = \beta_0$ ($= \bar{\beta} $ in 
  \cite[p. 268-69]{M}) and (cf. \eqref{ab1})
\begin{equation} \label{AS}
\frac{a  A_{\pm}(\beta_0) + b A'_{\pm}(\beta_0) }{A_{\pm}(\beta_0) } = a + b \frac{A_{\pm}'}{A_{\pm}} (\beta_0). \end{equation}
Here, $a,b$ are not classical symbols but have the form  \cite[(2.14)]{M}.  The  amplitudes $a, b$ may be taken to
be  supported
in $|s|, |r| \leq \epsilon, |\tau^2 - |\eta|^2| \leq \epsilon \tau^2$. We also recall
from Section \ref{GTSECT} (see \eqref{alpha}) that  $\alpha$ is linear in t.

The next step is to      change variables\footnote{ $|\eta'|^{-1/3} $ should be $|\eta'|^{1/3}$ and $|\theta|^{1/3}$ should be $|\theta|^{-1/3}$ in the change of variables in \cite[(3.5)]{M}.} in \eqref{w1G} to $$ \lambda = \beta_0 |\theta|^{-2/3}, S = s |\eta'|^{1/3} |\theta|^{-1/3},
R = r |\eta'|^{1/3} |\theta|^{-1/3}. $$ Here,
\begin{equation}\label{thetadef} \theta = \partial_t \alpha, \;\; \rm{so\; that}\; \alpha(0, t, q, \tau', \eta')
- \alpha(0, 0, q, \tau', \eta') = t \theta, \end{equation} 
explaining the $e^{i t \theta}$ factor in \eqref{edef}. We write out the change of variables
since there are several typographical errors above \cite[(3.5)]{M} and because a factor of $\theta^{n-1}$ was omitted. \footnote{Also, $s$
should be $S$ and the domain of integration should be $\theta (S - R(1 + x \gamma)) \leq 0$ in \cite[(3.6)]{M}.} Namely, the change of variables implicitly
involves introduction of polar coordinates in $\R_{\tau'} \times \R_{\eta'}$. 
The polar variable is $\theta$. On the unit sphere $S^{n-1}$, $\lambda$ is a well-defined coordinate and we fill it out to get local coordinates $(\lambda, \omega')$. The coordinates $(\lambda, \omega')$ depend only on $(\tau', \eta')$ and we denote the Jacobian by
$$d \tau' d \eta' = |\theta|^{n-1}J_1(\lambda, \omega')  d \theta d\lambda d \omega'. $$  The change of variables is given by
$$(r, s, \tau', \eta') \in \R_r \times \R_s \times T^*(\R_{t'}) \times T^*_q (\partial M) \to (R, S, \theta, \lambda, \omega') \in \R_{R,S} \times \R_+ \times S^{n-1}. $$
The level sets of $\lambda$ are those of $\beta_0$, so that $\beta_0 = 0$ defines the glancing directions. The full Jacobian is 
$$dr ds d \tau' d \eta' =| \theta|^{n-1}J(\lambda, \omega')  d \theta d\lambda d \omega', $$ 
where $J$ is homogeneous  of degree $0$.

From \eqref{thetadef} and from
$$\left\{ \begin{array}{l} 

|\theta| S \lambda  =  s \beta_0  |\eta'|^{1/3}, \;\;
|\theta| S^3/3  = s^3 |\eta'|/3, \\ \\
|\theta| R \lambda  = r \beta_0 |\eta'|^{1/3}, \;\;
|\theta| R^3/3  = r^3 |\eta'|/3.

\end{array} \right. $$ we find that the phase changes to the following:
\begin{equation} \begin{array}{l}
\alpha(0, t, q, \tau', \eta')
- \alpha(0, 0, q, \tau', \eta') + s \beta_0 |\eta'|^{\frac{1}{3}} + s^3 \frac{|\eta'|}{3} - r \beta_0 |\eta'|^{1/3} - r^3 |\eta'|/3 \\ \\ = t \theta +  \left(S \lambda +  S^3/3 - R \lambda - R^3/3 \right) |\theta|. \end{array} \end{equation}
The phase and Airy quotients  are independent of the $\omega'$ variables and we integrate out
in $J d \omega'$. The change of variables and integration have changed
the amplitudes $a,b$ in \eqref{AS} but for simplicity of notation we still
denote them by $a,b$. The integral is now over $dR dS d \theta d \lambda$.

 This gives \eqref{edef}  where $e(t,q, \theta)$ is  the analogue of \cite[(3.6)]{M},
\begin{equation}\begin{array}{ll} 
e(t, q, \theta) & = |\theta|^{n-1} \int \int_{\theta(S - R) \leq 0}  \int_{\R} e^{ i \left(S \lambda +  S^3/3 - R \lambda - R^3/3 \right) |\theta| }\\&\\ & As_{\pm} (\lambda |\theta|^{2/3},
\lambda |\theta|^{2/3}) dR dS d \lambda, \end{array}\end{equation}
where (cf. \ref{ab1})
\begin{equation} \label{ab2} As_{\pm} =  (a + b \frac{A_{\pm}'}{A_{\pm}} (\beta_0)) \in S^1 \;\; (b \in S^{2/3}, \; a \in S^1), \end{equation} and
 the sign of $\lambda$ is that of
$\beta_0$. 
The Airy amplitude  is a symbol of order $\frac{1}{3} $ of the form,
\begin{equation} \label{Asdef} As_{\pm} (\lambda |\theta|^{2/3},
\lambda |\theta|^{2/3})  \sim \sum_{j = 0}^{\infty} c_j(\lambda, 0) |\theta|^{\frac{1}{3} - j}, \;\;\; |\lambda \theta^{2/3}| \geq 1. \end{equation}
 As mentioned above, in the change
of variables and in the integration over $\omega'$, the coefficients $a, b$ of \eqref{AS} have been changed. 
We continue to denote the new amplitudes by $a,b$.  We refer to Section \ref{AIRYAPP} for further  details. \footnote{We warn that the factor $\theta^{n-1}$ was not put in
explicitly in \cite{M} and the book-keeping of the change in order of $a,b$ was lacking. Also, by comparision with \eqref{ab1} the roles of $a,b$ are interchanged.}

This is the essential starting point for the proof of Lemma \ref{MLEM}. 
The goal is to show that $e(t, q, \theta)$ is a classical symbol  in $\theta$ of order $n-1$, or equivalently that the integral defines a symbol of order $0$. Here, $\theta \in \R$ and there are two sides
accordingly as $\theta > 0, \theta < 0$. We do not have a $dx dy$ integral
as in \cite[(3.6)]{M} because we fix $x = 0$ and $y = q \in \partial M$. The 
amplitudes $a, b$ are supported in a region where both  $S, R$ are of order
 $O(\epsilon)$ where $\epsilon$ is the angle to the tangent plane.  See \cite[(3.9)]{M} in which  the  amplitude
is denoted by $a$.



To prove that $e(t, q, \theta)$ is a symbol of order $n-1$ in $\theta$, we  change variables to $U = S - R, V = S + R$. Then the constraint $\theta (S- R) \leq 0$ becomes
$\theta U \leq 0$, and

\begin{equation}\label{etqtheta}\begin{array}{lll}
e(t, q, \theta) & \sim & |\theta|^{n-1} \int_{\R} \int_{\R} \int_{U \theta \leq 0} e^{ i \left( U \lambda + U Q_2(U, V) ) \right) |\theta| } \\&&\\ && a(U, V, \lambda, \theta)   d \lambda  dU dV,
\end{array} \end{equation}
where $a(U, V, \lambda, \theta)$ is a symbol of order 1 in $\theta$ with compact support in $(\lambda, U, V)$ near $(0,0,0)$. 
Writing the phase as 
$
 (S - R) \lambda + \frac{1}{3} (S - R) (S^2 + S R + R^2), $
and changing variables, the new phase is
$$\Psi(\lambda, U, V) = \lambda U + U Q_2(U,V),$$ where
$$\begin{array}{l} Q_2(U, V) = 
  U^2 +3 V^2 .\end{array} $$






Following Melrose \cite{M}we take advantage of the constraint $U \theta \leq 0$ to make an almost analytic extension of the $d \lambda$
integral to the upper (lower) half plane $\lambda + i \sigma, \sigma \geq 0$
accordingly as $\theta < 0\; (\theta  > 0)$. Applying the Gauss-Green theorem,\footnote{See Remark \ref{AAREM} for more details} 
$ e(t, q, \theta)$  \eqref{etqtheta}  is given by,
\begin{equation} \label{AA} |\theta|^{(n-1)} \int_{\C_+} \int_{\R} \int_{\R} e^{i \theta (U (\lambda +  i \sigma) + i/3U Q_2(U, V))}
\overline{\partial}_{\lambda + i \sigma} \tilde{A}((\lambda + i \sigma) |\theta|^{\frac{2}{3}}) d \lambda d \sigma dU dV, \end{equation} where 
$\tilde{As}$ is the analytic extension of $As$  \eqref{Asdef}.
A key point of the
almost analytic extension is that $\overline{\partial}_{\lambda + i \sigma} \tilde{A}((\lambda + i \sigma) |\theta|^{\frac{2}{3}}) $
vanishes to infinite order at $\sigma = 0$. This will enable us to integrate
by parts below in $dU$. 
 Moreover,  the Airy quotients have uniform
asymptotic expansions in the half plane when multiplied by functions vanishing to all orders at $\sigma = 0$.  
Thus the Airy symbol is classical in the half-plane. As in  (3.9) of \cite{M}, one concludes that 
\begin{equation}\label{UPSHOT}
e(t, q, \theta) = |\theta|^{(n-1)}  \int_{\sigma \theta \leq 0} \int_{\theta U \leq 0} e^{i |\theta| \left(U (\lambda + i \sigma) + \frac{1}{3}
U Q_2(U, V)\right)} a d \sigma d \lambda d U dV, \end{equation}
where (by Lemma \ref{ORDERLEM})  $a$ is a classical  symbol of order 1 in $\theta$ which is compactly supported near $U = V = \lambda + i \sigma  = 0$ and vanishes
to infinite order at $\sigma = 0$. 
 We now use \eqref{UPSHOT} as a new starting point to prove Lemma \ref{MLEM}.

\subsubsection{Proof of Lemma \ref{MLEM}}

\begin{proof}

We   integrate  \eqref{UPSHOT}  by parts in the $d U$ integral using
the fact that
$$\frac{\partial}{\partial U} \left( U (\lambda + i \sigma + Q_2(U, V)) \right) = \lambda + i \sigma 
+ d_U (U Q_2(U, V)) $$
has no zeros if $\sigma \not= 0.$
We form the integration by parts operator
$$L_U = \theta^{-1} \frac{1}{\lambda + i \sigma +  d_U U (Q_2(U, V))}  \frac{\partial}{\partial U}, $$
which we may use on $\{\sigma \not=0\}$. When $\sigma = 0$, and when the terms $\lambda + d_U [U Q_2(U, V)] = 0$ one picks up powers of $\sigma^{-1}$ on each partial integration but they are cancelled by the 
infinite order vanishing of the amplitude 
 at $\sigma = 0$.  Hence, we can integrate by parts any number  $M$ of times on the full domain $\sigma \geq 0$.   That is, we use that
 $|\frac{\sigma^M}{(i \sigma + \gamma)^M}| \leq 1$ where $\gamma
 = \lambda + d_U(U Q_2). $ Each partial
integration picks up a  boundary term at $U = 0$. At $U = 0$, the integral ceases to be oscillatory since the
whole phase has a factor of $U$ and vanishes when $U = 0$ and the 
boundary term equals \begin{equation} \label{BDYTERM}
e_0 (t, q, \theta) = |\theta|^{n-2}  \int_{\sigma \theta \leq 0} \int_{\R} \frac{a(\lambda + i \sigma, 0, V)}{\lambda + i \sigma +  (Q_2(0, V))} d \sigma d \lambda dV, 
  \end{equation} 
  Since $a$ has order 1, $e_0$ is a symbol of order $n-1$. Due to  the almost-analyticity of $a$, the amplitude vanishes to infinite order on
  $\sigma = 0$, so  the integral converges absolutely.
  
    Each additional partial integration introduces a factor of $\theta^{-1}$
  and gives a boundary term and an interior term. After $M$ repeated integrations
  by parts one ends up with a series of boundary terms of decreasing
  order in $\theta$ and an interior remainder term of order $|\theta|^{-M}$.  The leading boundary term \eqref{BDYTERM}  has a  classical expansion from that of the amplitude $a((\lambda + \sigma, 0, V) $,  and each lower
  order term has a similar expansion. The remainder after $M$ partial integrations is $|\theta|^{-M}$ times the symbol expansion of the
  corresponding amplitude, $(L^t)^M a((\lambda + \sigma, U, V) $.  
The order of $e$ is that of the leading boundary term $e_0$ and thus is $(n-1)$. Moreover, since the $dV$ integral
is over $[0, \epsilon]$ it follows that $e(t, \theta, q) \leq C \epsilon |\theta|^{n-1}.$
This completes the proof 
of  Lemma \ref{MLEM} for the $w_{1,G}$ terms.

\end{proof}

 \begin{rem} \label{AAREM}The formula  \eqref{AA}, or more explicitly,
 $$\begin{array}{l} 
  \int_{\C_+} \int_{\R} \int_{\R} e^{i \theta (U (\lambda +  i \sigma) + i/3U Q_2(U, V))}
\overline{\partial}_{\lambda + i \sigma} \tilde{A}((\lambda + i \sigma) |\theta|^{\frac{2}{3}}) d \lambda d \sigma dU dV\\ \\
=  \int_{\partial \C_+} \int_{\R} \int_{\R} e^{i \theta (U (\lambda +  i \sigma) + i/3U Q_2(U, V))} \tilde{A}((\lambda + i \sigma) |\theta|^{\frac{2}{3}}) d \lambda  dU dV, \end{array} $$
follows from the Gauss-Green formula (valid for any $C^1$ domain $D$
and $C^1$ function $f$),
$$\int_{\partial D} f(z) dz = - 2i \int \int_D \frac{\partial f}{\partial \bar{z}}(z) dL(z), $$
and from the fact that the phase $e^{i \theta (U (\lambda +  i \sigma) + i/3U Q_2(U, V))}$ is analytic in $\lambda$.

 The usual stationary phase method does not apply since the phase is
 degenerate and the $dU$ integral is over a half-line;
 the expansion comes from the boundary terms
 at $U = 0$. \end{rem}

 \subsection{\label{ALLwG} Completion of the proof of Lemma \ref{MLEMintro}}

In Lemma \ref{MLEM},  we proved that in the  Neumann case,  the singularity of $w_{1,G}(t)$ at $t =0$ is classical. 
To complete the proof of  Propositions \ref{MELROSE} in the Neumann case,  we must do the
same for the term  $w^{(2)}_{1,G}(t, q, q)$ of $w_{1,G}$
(see  \cite[(3.4)]{M}) as well as the terms $w_{2,G}^{\pm} $ (\cite[(2.26)]{M}  $(u_G)_c$ \cite[2.21]{M}. They are handled in a way very similar to $w_{1,G}(t)$ and we only sketch the changes in the proof.

In the  second term $w^{(2)}_{1,G}(t, q, q)$ of $w_{1,G}$  there is no $dr ds$ integral. Integrating out $dr$ and restricting to the boundary produces
\begin{equation} \begin{array}{lll} w^{(2)}_{1,G}(t, q, q)  & = & \int e^{i \alpha(0, t, q, \tau', \eta')
- i \alpha(0, 0, q, \tau', \eta') } 
As(\beta_0, \beta_0)  \;\;   d\tau' d \eta'. \end{array} \end{equation}This gives an integral of the form,
$$\int_{\R} As_{\pm} (\lambda |\theta|^{2/3}, \lambda |\theta|^{2/3}) a(\lambda) d \lambda$$
and we obtain a classical expansion using the classical expansion of $ As_{\pm} (\lambda |\theta|^{2/3}, \lambda |\theta|^{2/3}) $.

The terms $w_{2,G}^{\pm}$ have the same form as $w_{1,G}$ and the normality of the singularity at $t = 0$ is
proved in the same way. 

The term $u_c$ is obtained from $u_G$ \eqref{uG}  (\cite[(2.17)]{M}) by multiplying by $H(t)$ and cutting off in $x < 0$. 
 As explained
on page 270 of \cite{M}, the integral of $(u_G)_c$ over $z = z'$ is classical. We now verify that the boundary trace
$u_G^b$ on the diagonal of the boundary  is
classical. It is given by
\begin{equation} \label{uGb} u^b_G(t, q, q) = \int e^{i \Phi(0, t, q, s, \tau, \eta)  - i \Phi(0, 0, q, r, \tau, \eta)} c_G(0, t, q, s, r, \tau, \eta) d \tau d \eta
dr ds. \end{equation}
As above,
$\Phi(0, t, y, s, \tau, \eta) = \alpha(0, t, y,  \tau, \eta) + s |\eta|^{\frac{1}{3}} \beta (0,  y,  \tau, \eta)
+ \frac{1}{3} s^3 |\eta|$ and $c_G$ is a first order classical symbol supported in $|s|, |r| \leq \epsilon, |\tau^2 - |\eta|^2| \leq \epsilon \tau^2$ for some $\epsilon > 0$.  This form is similar to that of $w_{1,G}$ but does not have the Airy
amplitude. The proof of the normality of the singularity at  $t = 0$ of $w_{1,G}$ in Section \ref{GLSING} applies to it as well.

\subsection{Dirichlet boundary conditions}
From the point of view of glancing parametrix
constructions, the Dirichlet case is simpler than the Neumann case because
it is not necessary to invert the Neumann operator $\ncal$ in \cite[2.5]{M}. But $\ncal$ itself is
a Fourier-Airy integral operator of the same kind as in \cite[p. 263]{M}. See also \cite[Ch. X.5]{Tay2} and \cite{F} for
background on the Neumann operator.  The parametrices
have the same form and so we do not repeat the details of the parametrix
construction or of Propositions \ref{MLEMintro} and Proposition \ref{MLEM}. We only
discuss the change in Cauchy data due to the two normal derivatives.

  In the notation of \cite{M}, we
 now take $\partial_x \partial_{x''}$ and set $x = x'' = 0$. This changes the amplitude in several ways, all obvious
 from the facts that the phase and amplitude are symbols.
From the phase, differentiation brings down a factor of
$$(I)\;\;\;\;\partial_x \alpha(x, t, y, \tau', \eta')_{x = 0} , \;\; \partial_{x''} \alpha(x'', 0, y, \tau, \eta') |_{x'' = 0}. $$
Each derivative  raises the order by $1$ and the leading order term is given by two uses of the derivative
on the phase, raising the order by $2$.

If the derivative is placed instead on the amplitude, we first have the  term
$$(II)\;\;\; \partial_{x''}  \beta(x'', 0, y'', \tau,\eta') (- i r |\eta'|^{1/3}) |_{x''=0} $$
Finally we also have 
$$(III)\;\;\;\partial_{x} \partial_{x''} \frac{a  A_{\pm}(\beta) + b A'_{\pm}(\beta) }{A_{\pm}(\beta_0) }|_{x= x'' = 0}. $$

Thus, in the Dirichlet case, we  have
$$\begin{array}{l}  \;\partial_{x} \partial_{x''} w_{1, G}(t, q, q'')  \\ \\ = \int \int_0^{\infty}   e^{i \alpha(0, t, q, \tau', \eta') - i t' (\tau' - \tau) \mu 
+ i s \beta_0' |\eta'|^{\frac{1}{3}} + i \frac{s^3}{3} |\eta'|  }  e^{- i \alpha(0, 0, q'', \tau, \eta')  - ir \beta''
|\eta'|^{\frac{1}{3}} - i \frac{r^3}{3} |\eta'| }  dt'  d s dr d \tau d \tau'  d \eta'\\ \\ 
 \times \left[\partial_x \alpha(x, t, y, \tau', \eta')  + \partial_{x''} \alpha(x'', 0, y, \tau, \eta') + \partial_{x''}  \beta(x'', 0, y'', \tau,\eta') (- i r |\eta'|^{1/3})) 
\frac{a  A'_{\pm}(\beta_0) + b A_{\pm}(\beta_0) }{A_{\pm}(\beta_0) } \right] \\ \\+ \times \left[\partial_{x} \partial_{x''}  \frac{a  A'_{\pm}(\beta_0) + b A_{\pm}(\beta_0) }{A_{\pm}(\beta_0) } \right]
. \end{array}$$
The new amplitude has the same essential properties as that of the Neumann case since, as mentioned above,
the phases and amplitudes are symbols.  Thus,
as explained in \cite{M}, the argument of the Neumann case extends to the Dirichlet case with no essential
change.

\section{\label{PTWSECTb} Completion of the proof of Proposition \ref{MELROSE} }

Proposition \ref{MELROSE} asserts that the boundary trace $E_N^b(t, q, q)$  of  the Neumann wave kernel has a classical co-normal singularity
at $t = 0$. In the Dirichlet trace we take the normal derivative in each variable before restricting to the diagonal
of the boundary. 
In this section we complete the proof of Proposition \ref{MELROSE}. We briefly  discuss
the additional terms that were not handled in Section \ref{GLSING}. 
We also  discuss the modifications
when we apply a boundary pseudo-differential operator as in Proposition \ref{OpaPTWEYL}. 




\subsection{\label{EH} Elliptic and Hyperbolic terms}

In this section, we briefly discuss the elliptic and hyperbolic terms,
\begin{equation} \label{MICRODECOMPEH} \left\{ \begin{array}{l} w_{1,EH} = w^+_{1, H} + w^-_{1, H} + w_{1, E} \\ \\
w^{\pm}_{2, EH} = w^{\pm, +}_{2, H} + w^{\pm, -}_{2, H} + w^{\pm}_{2, E} \end{array} \right.\end{equation}
of \eqref{MICRODECOMP}. For simplicity of notation we denote any of the terms by $e_{EH}(t)$.
Parallel to Lemma \ref{MLEMintro} and Lemma \ref{MLEM} we assert the following:

\begin{lemma} \label{EHLEM} On the diagonal of $\partial M \times \partial M$, the non-glancing part $w_{EH}$ of the parametrix has a normal singularity at $t = 0$, i.e.
\begin{equation} \label{ehdef} w_{EH}(t, q, q) = \int e^{i t \theta} e_{EH}(t, \theta, q) d \theta, \end{equation}
where  $e_{EH}(t, \theta, q)$  is a classical symbol in $\theta$ of order $n-1 $ when $ q \in \partial M$.

 \end{lemma}
 
 We do not give a proof of this statement because it is already known. 
Parametrices for the wave group on a manifold with boundary cut off from the glancing directions are given in \cite{Ch2}.   Away from the glancing (tangential) directions,
$E^b_B(t, q, q)$ is a Fourier integral kernel whose symbol is computed in \cite{HeZ}. It is  further
discussed in \cite{HeZ,HHHZ} (see also \cite{SmS95}).

\subsubsection{Pseudo-differential cutoffs to the non-glancing region} 
In the first part of  Proposition \ref{PTWEYL},  we apply a boundary (semi-classical) pseudo-differential operator $Op_h(a)$ under
the integral sign in both variables for  the parametrices for the wave kernel and their boundary traces.  When the symbol vanishes near the glancing direction, there is a
standard Fourier integral parametrix and one may apply 
the  pseudo-differential operator $Op_h(a)$  to the oscillatory integral.
 By the  ``fundamental asymptotic
expansion Lemma"  of as  \cite{Tay2} (Section VIII \S 7), application of $Op_h(a)$   changes the 
amplitude to another amplitude with the same symbolic properties. Thus, in  the elliptic or hyperbolic regions, where the oscillatory integral satisfies the assumptions (2.3)-(2.4) of Taylor (loc.cit.), 
the oscillatory integrals are standard ones. Indeed, such cutoffs essentially lead back to
the statements of Section \ref{EH}.

\subsection{\label{PSIDOGL} Pseudo-differential cutoffs to the non-glancing region} 

For the glancing term of Proposition \ref{OpaPTWEYL} we need to apply $Op_h(a)$ on the left side and right side to $w_{G}(t, q, q')$ in
\eqref{w1} for  $q= (0, y), q'' = (0, y'') \in \partial M$, and then set $q = q'$. For future reference, we state
the generalization of Lemma \ref{MLEMintro} for this modification as follows:

\begin{lemma} \label{PSIDOMLEM} On the diagonal of $\partial M \times \partial M$, 
\begin{equation} \label{COedef} Op_h(a) w_{G}(t, q, q') |_{q' = q}= \int e^{i t \theta} e_{a}(t, \theta, q) d \theta, \end{equation}
where  $e_{a}(t, \theta, q)$  is a classical symbol in $\theta$ of order $n-1 $ when $ q \in \partial M$.

 \end{lemma}

\begin{proof}

We are applying
semi-classical pseudo-differential operators on a manifold without boundary
to an oscillatory integral

\begin{equation} \label{w1G21b} \begin{array}{lll} w_{1, G}(t, (0,y), (0,y''))  &=& \int \int_0^{\infty}   e^{i \left(\alpha(0, t, y, \tau', \eta')  -  \alpha(0, 0, y'', \tau, \eta') \right) - i t' (\tau' - \tau) \mu }\\&&\\&&e^{
 i s \beta_0' |\eta'|^{\frac{1}{3}} + i \frac{s^3}{3} |\eta'|  }  e^{  - ir \beta''
|\eta'|^{\frac{1}{3}} - i \frac{r^3}{3} |\eta'| }
\frac{a  A'_{\pm}(\beta_0) + b A_{\pm}(\beta_0) }{A_{\pm}(\beta_0) }\\ &&\\&& 
 dt'  d s dr d \tau d \tau'  d \eta'. \end{array} \end{equation}
where $\beta_0' =\beta(0, q, \tau, \eta')$ and $\beta'' = \beta(0, q'', \tau, \eta')$ are defined in \eqref{beta}The terms of the phase
$\alpha(0, t, y, \tau', \eta')  -  \alpha(0, 0, y'', \tau, \eta')$ and $t'(\tau- \tau) \mu$ are classical. 
Taking into account Property  (2) of the list in Section \ref{GTSECT}, 
$\beta_0$ is independent of $q \in \partial M$, and has the form $\beta(0, q, \tau, \eta) = (\tau^2 - |\eta|^2) |\eta|^{-4/3}$,

Hence
$Op_h(a)$ in either $q$ or $q'$ is the  application of a pseudodifferential
operator with a Fourier-Airy integral operator with a classical phase but
with an  Airy amplitude, which is a non-classical symbol.
The fundamental asymptotic expansion lemma is essentially the stationary
phase method, and it applies to this composition in the $(q, q')$ variables. 
Thus, after application of $Op_h(a)$ on either side we obtain a new Fourier
Airy integral operator on $\partial M$ with the same phase but a new amplitude,
with the same properties as those of $w_{1,G}$ and of the same order.
The composition of Airy operators and the symbol expansion is discussed in detail in \cite{M78}. The rest of the argument proceeds as in the proof
of Proposition \ref{PTWEYL}.



\end{proof}

\section{\label{PTWSECT} Proof of the pointwise Cauchy data Weyl laws of Propositions \ref{PTWEYL} and \ref{OpaPTWEYL}}

In this section, we complete the proofs of Proposition \ref{MLEMintro} and Proposition \ref{MLEM}. They
follow from a standard cosine Tauberian argument and Proposition \ref{MELROSE} once it is converted into
 dual statements about convolutions of spectral measures with special test functions.
In addition, we  need to justify the statement that   
    the contribution of the 
 glancing terms
to the first two terms of the expansions of Proposition  \ref{OpaPTWEYL}   are of order $O(\epsilon)$ as $\epsilon \to 0$

\subsection{Proof of Proposition \ref{PTWEYL}}

Following the standard route of Fourier Tauberian theorems, we first observe that 
\begin{equation} \label{Sqt}  S_q(t) = \fcal_{\lambda \to t}\; d_{\lambda} \Pi^b_{[0, \lambda]}(q,q), \end{equation}
where $S_q$ is defined in \eqref{Seq} and $\Pi^b_{[0, \lambda]}(q,q)$ is defined in \eqref{BTSPEC}.
To determine the co-normal expansion of the singularity in Proposition \ref{MELROSE} at $t = 0$, we study the dual problem
\begin{equation} \label{Seq} S_q(\lambda, \rho) = \rho * d_{\lambda} \Pi^b_{[0, \lambda]}(q,q) =  \int_{\R} \hat{\rho} (t) \;S_q(t) e^{i t \lambda} dt \end{equation}
where $\rho  \in \scal(\R)$ (Schwartz space) with $\hat{\rho} \geq 0$,  $\hat{\rho} \in C_c^{\infty}(\R)$
even, satisfying $\int_{\R} \rho dx = 1$ and with   supp  $\hat{\rho}$ contained
in a  sufficiently small neighborhood $[-\epsilon, \epsilon]$ of $t = 0$ so that $t = 0$ is the only singularity 
of $E_B^b(t, q, q)$ in supp $\hat{\rho}.$  Thus,
\begin{equation}\label{eq:BFSSb1} 
S_q(\lambda, \rho) =\frac{\pi}{2}\sum_j (\rho(\lambda-\lambda_j) + \rho(\lambda+\lambda_j)) |\phi_j^b(q)|^2.
\end{equation}

To prove Proposition \ref{MELROSE} and Proposition \ref{PTWEYL} it suffices to prove 

\begin{lemma} \label{SF2} Let $(M, g)$ be a compact Riemannian manifold with concave boundary.  If supp  $\hat{\rho}$ is sufficiently close to $t = 0$, then $S_q(\lambda, \rho)$ is a semi-classical Lagrangian distribution whose asymptotic expansion in
in the Neumann, resp.  the Dirichlet, case 
has the form,

\begin{equation}\label{eq:BFSSb} 
S_q(\lambda, \rho) = \left\{ \begin{array}{l}
 C_n \lambda^{n-1} + Q_N(q) \lambda^{n -2 } + O(\lambda^{n-3}), \; \rm{Neumann}, \\ \\
 C_n \lambda^{n+1} + Q_D(q) \lambda^{n } + O(\lambda^{n-1}), \; \rm{Dirichlet} \end{array} \right.
\end{equation}
where $C_n = \frac{\omega_n}{(2 \pi)^n}$, $C_n'$  is a constant depending only on the dimension and $Q_{D,N} (q)$ is a local geometric invariant of $\partial M$, equal to a dimensional constant times the mean curvature   in the case of Dirichlet boundary conditions. \footnote{It is almost certainly given by a dimensional constant times the mean curvature in the case of Neumann boundary conditions by the same invariance theory argument as in \cite{O79}. }\end{lemma}

Above,  $\omega_n$ is the volume of the unit ball in $\R^n$ and $Q_D, Q_N$ are defined in \eqref{QDN}. The extra
power in the Dirichlet case is due to the normal derivatives, since we chose
not to use semi-classical normal derivatives in   \eqref{SCTr}. 

\begin{proof} (Sketch)

There are
two independent aspects of the Proposition. The first is to prove that $S_q(\lambda, \rho)$ has a complete asymptotic
expansion when $\hat{\rho}$ has support sufficiently close to $t = 0$. By Fourier transform methods, this
is equivalent to normality of the singularity  of $E_B^b(t, q, q)$ at $t = 0$
of Proposition \ref{MELROSE}.  If we use the microlocal decompositions \eqref{MICRODECOMP}, then $S_q(t)$
and 
$S_q(\lambda, \rho)$ break up into corresponding terms $S_{q, G}(\lambda, \rho), S_{q,EH}(\lambda, \rho)$ corresponding to $e(t,\theta, q)$
\eqref{edef} and $e_{EH}(t, \theta, q)$ of Lemma \ref{EHLEM}. Comparing \eqref{edef} and \ref{Seq}, we see that
\begin{equation} \label{SqG} S_{q, G}(\lambda, \rho) = \int_{\R} \int_{\R} \hat{\rho}(t) e^{i t \lambda} e^{i t \theta} e(t, \theta, q) d \theta dt, \end{equation}
and similarly for $S_{q, EH}$ with $e_{EH}(t, \theta,q)$ replacing $e(t, \theta,q). $ Changing variables $\theta \to \lambda \theta$ produces a semi-classical oscillatory integral with phase $t(1 + \theta).$ The phase is non-degenerate
with only one critical point at $\theta = -1, t = 0$, and by stationary phase, one has complete asymptotic expansions
\begin{equation}\label{Se} \left \{ \begin{array}{l} S_{q, G}(\lambda, \rho) \simeq  \hat{\rho}(0) \; e(0, -  \lambda, q) + \cdots \\ \\
S_{q, EH}(\lambda, \rho) \simeq \hat{\rho}(0)\; e_{EH}(0, -\lambda, q) + \cdots,
\end{array} \right. \end{equation}
where we omit the lower order terms arising from the Hessian operator expansion of stationary phase for brevity.
In particular, $S_{q, G}, S_{q, EH}$ are symbols of order $n -1$ in $\lambda$. In the case of $S_{q,G}$, all
terms are of order $\epsilon$ as in the proof of Lemma \ref{MLEM}, and so is the stationary phase remainder,
since they are all given as integrals in $dV$ over $[0, \epsilon]$.

  The second aspect of Lemma \ref{eq:BFSSb}  is the calculation of the coefficients, which is not actually needed for the proof of Theorem \ref{SoZthm}. Hence, we only provide some references.
The calculation of $Q_D$ is  essentially due to S. Ozawa \cite[Theorem 1, Proposition 3]{O79}, who used the Hadamard variational method to 
calculate the  coefficients in the small $t$ expansion of the  Cauchy data of the heat kernel.  See also \cite{Mi} for special cases and corrections. The coefficients above can be calculated by subordination
of the wave kernel to the heat kernel, once it is proved that  $S_q(\lambda, \rho) $ admits an expansion.
  A formal calculation of the first two coefficients is also
given in (4.3) of  \cite{BSS} in dimension two.

\end{proof}

To complete the proof of Proposition \ref{PTWEYL}, we apply a  standard cosine Tauberian theorem  \ref{ITT} of \cite{I80} (see
Section \ref{TAUBERAPP}, 
 or \cite[Lemma 17.5.6]{HoI-IV}), where we let  and $T =1$. \footnote{$\rho$ is denoted $\hat{\beta}$ in Theorem \ref{ITT}.}
\begin{equation} \label{TAUBER1} \Pi_{[0, \lambda]}^b(q,q) =  \rho *  \Pi_{[0, \lambda]}^b(q,q) + O(\lambda^{n-1}). \end{equation}
There is one integration in $\lambda$ by comparison with \eqref{Seq}, raising the orders by 1.

\subsection{\label{PTGENSECT} Pointwise Weyl laws for cutoff Cauchy data}

 To prove Proposition \ref{EPLEM} we  need the following
 generalized pointwise  Weyl law  for Cauchy data of eigenfunctions in Section \ref{SUPSECT}. We denote by $Op_h(a)$ a semi-classical
pseudo-differential operator on $\partial M$. We refer to \cite{Zw,HZ,tz1,HHHZ} for background.

\begin{prop} \label{OpaPTWEYL} Let $(M, g)$ be a compact Riemannian manifold with concave boundary. There is a constant $C$ depending only on $(M,g)$ so that if $Op_h(a)$ is a
semi-classical zero order pseudodifferential operator  on $\partial M$ with principal symbol $a_0(q, \eta)$ vanishing in an $\epsilon$-neighborhood of the glancing set, then for $q \in \partial M$,  in the Neumann case, 
\begin{equation}\label{z.8}
\sum_{\la_j\in [0,\la]}| Op_h(a)  \phi^b_j(q)|^2
= \la^{n}\int_{B_q^*\partial M}|a_0(q,\xi)|^2 (1 - |\xi|^2)^{- \half} \, d\xi +O_{a, \epsilon}(\la^{n-1}).
\end{equation}
In the Dirichlet case,
\begin{equation}\label{z.8D}
\sum_{\la_j\in [0, \la]}| Op_h(a)  \phi^b_j(q)|^2
= C\la^{n+ 2}\int_{B_q^*\partial M}|a_0(q,\xi)|^2 (1 - |\xi|^2)^{ \half} \, d\xi +O_{a, \epsilon}(\la^{n +1}).
\end{equation}

On the other hand, if the principal symbol is supported in an $\epsilon$-neighborhood of the
glancing set, then
in the Neumann case, 
\begin{equation}\label{z.8n}
\sum_{\la_j\in [0,\la]}| Op_h(a)  \phi^b_j(q)|^2
= O( \epsilon \; \la^{n}).
\end{equation}
In the Dirichlet case,
\begin{equation}\label{z.8N}
\sum_{\la_j\in [0, \la]}| Op_h(a)  \phi^b_j(q)|^2
= O (\epsilon \; \la^{n +2}).
\end{equation}

\end{prop}


\begin{proof}

The proof is very similar to that of Proposition \ref{PTWEYL}.  It only
differs in that we apply a pseudo-differential operator on the boundary first. 
Proposition
\ref{OpaPTWEYL}  is again proved using a cosine Tauberian theorem applied to the Weyl sums,
\begin{equation} \label{OpSp} N_a(\lambda, q): =
\sum_{\la_j\in [0,\la]}| Op_h(a)  \phi^b_j(q)|^2.
\end{equation} We then study the convolution,

\begin{equation} \label{Seqa} S_a(\lambda, \rho) = \rho * d_{\lambda} N_a(\lambda, q)  = \int_{\R} \hat{\rho} (t) \;S_a(t,q,q) e^{i t \lambda} dt \end{equation}
where 
\begin{equation} \label{Sata}  S_a(t, q, q):= \sum_j | Op_h(a)  \phi^b_j(q)|^2 \cos t \lambda_j = \fcal_{\lambda \to t} \;
d_{\lambda} N_a(\lambda, q),  \end{equation}
and where
$\rho  \in \scal(\R)$ is as before. Thus,

\begin{equation}\label{eq:BFSSb12} 
S_a(\lambda, \rho) =   \frac{\pi}{2}\sum_j (\rho(\lambda-\lambda_j) + \rho(\lambda+\lambda_j)) |Op_h(a) \phi_j^b(q)|^2.
\end{equation}

\begin{lemma} \label{rhoSaLEM} 

\noindent{(i)} If $Op_h(a)$ is microsupported in the complement of an $\epsilon$-neighborhood
of the glancing set, then

\begin{equation}\label{eq:BFSSb2} 
S_a(\lambda, \rho) = \left\{ \begin{array}{l}  

\la^{n-1}\int_{B_q^*\partial M}|a_0(q,\xi)|^2 (1 - |\xi|^2)^{- \half} \, d\xi +O_{a, \epsilon}(\la^{n-2}). \; \rm{Neumann}, \\  \\
 C\la^{n+ 1}\int_{B_q^*\partial M}|a_0(q,\xi)|^2 (1 - |\xi|^2)^{ \half} \, d\xi +O_{a, \epsilon}(\la^{n}). \; \rm{Dirichlet} \end{array} \right.
\end{equation}\bigskip

\noindent{(ii)} If $Op_h(a)$ is microsupported in an $\epsilon$-neighborhood
of the glancing set, then

\begin{equation}\label{eq:BFSSbg} 
S_a(\lambda, \rho) = \left\{ \begin{array}{l}  O(\epsilon
\la^{n-1})\ \; \rm{Neumann}, \\  \\
 O(\epsilon \; \la^{n+ 1}). \; \rm{Dirichlet} \end{array} \right.
\end{equation}

\end{lemma}

\begin{proof} (Sketch) 
The existence of the expansions follows from the normality of the singularity of $S_a(t)$, which is
proved in Lemma \ref{PSIDOMLEM} in Section  \ref{PSIDOGL}. As in Lemma \ref{SF2}, the expansions are easily
derived from those of  $e_a$ in Lemma \ref{COedef}  and the corresponding symbol for the elliptic or hyperbolic
regions. The calculations of the principal coefficients in the non-glancing region
are from \cite{HZ,HeZ,HHHZ}.

In the glancing region, $e_a$ is essentially the same as $e(t, \theta, q)$ \eqref{edef}, as discussed in Section
\ref{PSIDOGL}, and \eqref{Se} remains valid.   As in that setting, we use the microlocal decompositions \eqref{MICRODECOMP} to express  $S_a(t)$
and 
$S_a(\lambda, \rho)$ as a sum of terms $S_{a, G}(\lambda \rho), S_{a,EH}(\lambda, \rho)$  as in  Lemma \ref{EHLEM}, and find that
$$S_{a, G}(\lambda, \rho) = \int_{\R} \int_{\R} \hat{\rho}(t) e^{i t \lambda} e^{i t \theta} e(t, \theta, q) d \theta dt, $$
and similarly for $S_{q, EH}$ with $e_{EH}(t, \theta,q)$ replacing $e(t, \theta,q). $ Changing variables $\theta \to \lambda \theta$ produces a semi-classical oscillatory integral with phase $t(1 + \theta).$ The phase is non-degenerate
with only one critical point at $\theta = -1, t = 0$, and by stationary phase, one has complete asymptotic expansions
\begin{equation}\label{SaG} \left \{ \begin{array}{l} S_{a, G}(\lambda, \rho) \simeq  \hat{\rho}(0) \; e(0, -  \lambda, q) + \cdots \\ \\
S_{a, EH}(\lambda, \rho) \simeq \hat{\rho}(0)\; e_{EH}(0, -\lambda, q) + \cdots,
\end{array} \right. \end{equation}
where we omit the lower order terms arising from the Hessian operator expansion of stationary phase for brevity.
In particular, $S_{q, G}, S_{q, EH}$ are symbols of order $n -1$ in $\lambda$. In the case of $S_{q,G}$, all
terms are of order $\epsilon$ as in the proof of Lemma \ref{MLEM}, and so is the stationary phase remainder,
since they are all given as integrals in $dV$ over $[0, \epsilon]$.

\end{proof}

We again  apply the  cosine Tauberian theorem \eqref{ITT}  with $\rho = \hat{\beta}$ and $T = 1$ to complete the proof.

 Another way to check the $O(\epsilon)$ is as follows: In  the notation of \eqref{z.8}, where $\eta \in B_q^* \partial M$ is the projection of a unit covector
 $\xi \in T_q^*M$ to $T_q \partial M$ making an angle of $\leq \epsilon$.  The sector corresponds to $\sqrt{1 - |\eta|^2} \leq \epsilon $ in $B^*_q M$ or to $ \sqrt{\tau^2 - |\eta|^2} \leq \epsilon \tau^2 $ in the homogeneous model.  Hence, using polar coordinates $|\xi|  =r$ in $B^*_q M$, the integral  is bounded by
$\int_{\sqrt{1 - \epsilon^2}}^1 (1- r)^{-\half} dr = O(\epsilon) $ in the Neumann case and
by $\int_{\sqrt{1 - \epsilon^2}}^1 (1- r)^{\half} dr $ in the Dirichlet case. 
It follows that the terms obtained as boundary values in the integration by parts are also of order $\epsilon$.

 \end{proof}

\section{\label{SUPSECT} Sup norm bounds for Cauchy data: Proof of Theorem \ref{SoZthm}}


We now complete the proof of Theorem \ref{SoZthm}, following
the outline of  that in \cite{SZ1}. The theorem follows from Proposition \ref{EPLEM}, but
in fact it suffices to prove a somewhat weaker statement, Lemma \ref{1.11} below. After
proving this Lemma and Theorem \ref{SoZthm} we prove the stronger asymptotic result
of Proposition \ref{EPLEM}.

We  define the boundary  billiard  loop-length function  on the unit co-ball bundle $B^* \partial M$  by
\begin{equation}\label{M1}
L^*(q, \eta)= \left\{ \begin{array}{l} \inf \{t>0: \,\Phi^t(q, \xi) =q, \; \rm{if \; q\; is \; a \; loop\; point}  \},\\ \\
\infty,  \;\rm{if\; no\; such \; t \; exists}. \end{array} \right. 
\end{equation}
where $\xi \in S^*_q M$ is the unit co-vector projecting to $\eta \in B^*_q \partial M$ and where $\Phi^t$ is the billiard flow on $T^* M$.  $L^*$ is  homogeneous
of degree zero,  so it is natural to consider the restriction
of $L^*$ to unit covectors to $M$ along $\partial M$. 
Note that  $L^*$  is a lower semicontinuous function, or equivalently that
the function $1/L^*(q, \eta)$, which is defined to be zero when $L^*(x,\xi)=+\infty$,
is an upper semicontinuous function.

We  introduce a cutoff $\hat{\rho} \in C_0^{\infty} (\R)$, which as above is  a positive even  function   such that $\hat\rho$ is identically $1$ near $0$, has support in $[-1,1]$ and is decreasing on $\R_+$.  We also define $\rho_T$ by
$\hat{\rho}_T(t) = \hat{\rho}(\frac{t}{T})$, so that   $\supp \hat{\rho}_T  \subset  (-T,T)$. 
To prove Theorem \ref{SoZthm}   it suffices to prove
\begin{lemma} \label{1.11} If   $|{\mathcal L}_q|=0$, then for all $\epsilon > 0$  there exists a neighborhood $\ncal(q, \e) \subset \partial M$ 
and a time $T_0(\epsilon)$ so that
for $T \geq T_0(\epsilon)$,
\begin{equation}
\sum_{j=0}^\infty
\rho\bigl(T(\la-\la_j)\bigr)|\phi^b_j(q')|^2 \le \e \la^{n-1}, \quad
\text{if } \,
 q'\in {\mathcal N}(q,\e), \, \,
\text{and } \, \, \la\ge \Lambda.
\end{equation}
\end{lemma}

Before proving  Lemma \ref{1.11}, we show that it implies  Theorem \ref{SoZthm}. 

Indeed,  we observe that for fixed $q \in
\partial M$ and any $\varepsilon>0$,  one
can find a neighborhood $\mathcal{N}_\varepsilon(q)$ of $q$ and an
$\Lambda_\varepsilon(q)$ so that when $\lambda\ge
\Lambda_\varepsilon(q)$ and $y\in \mathcal{N}_\varepsilon(q)$ we
have $|R(\lambda,y)|\le \varepsilon \lambda^{n-1}$.  This implies
that $|\phi_j^b(y)|\le \varepsilon \lambda_j^{(n-1)/2}$ if $y\in
\mathcal{N}_\varepsilon(q)$ and $\lambda\ge
\Lambda_\varepsilon(q)$. Since $M$ is compact and since the open sets
$\{\mathcal{N}_{\varepsilon}(q)\}$  form open cover of $M$, we may
choose a finite subcover and extract the largest
$\Lambda_{\varepsilon}(q)$. For this $\Lambda_\varepsilon$, Lemma \ref{1.11} gives
\begin{equation} \label{phiep} |\phi_j^b(q)|\le \varepsilon\lambda_j^{(n-1)/2},
\quad \lambda_j\ge \Lambda_{\varepsilon}.  \end{equation} Since  $\Lambda_{\varepsilon}$ depends only on $\varepsilon$,
it follows that $\sup_{q \in \partial M} |\phi_j^b(q)| = o(\lambda_j^{(n-1)/2})$, as stated in Theorem \ref{SoZthm}.


\subsection{Proof of Lemma \ref{1.11}}

\begin{proof}

We  consider the smoothed restricted Weyl sum \eqref{eq:BFSSb}
\begin{equation} \label{rho} \begin{array}{lll} S_{q}(\lambda, \rho_T) := \rho_T * d_{\lambda} \Pi_{[0, \lambda]} (q,q) & = & \frac{1}{T} \int_{\R}   E_B^b(t, q, q)
\hat{\rho}(\frac{t}{T}) e^{ - i t \lambda} dt \\ &&\\
& = & \sum_j (\rho(T(\lambda-\lambda_j)) + \rho(T(\lambda + \lambda_j))  |\phi_j^b(q)|^2.
\end{array} \end{equation} As in \cite{SZ1}, the $\rho(T(\lambda + \lambda_j))$ term contributes $\ocal(\lambda^{-M})$
for all $M > 0$ and therefore may be neglected. To prove Lemma \ref{1.11} it suffices to show that for any $T$,
\begin{equation}\label{z.4}
|S_{q'}(\lambda, \rho_T)|\le \e\la^{n-1}+O_T(\la^{n-2}),  \quad q' \in \ncal(q, \e).
\end{equation}

If $|{\mathcal L}_q|=0$, then
$${\mathcal L}^T_q=\bigl\{\xi\in S^*_{q, in} M: \, \,
\Phi^t(q,\xi)\in S^*_qM \, \, \text{for some } \,
t\in [-T,T]\backslash \{0\}\bigr\}$$
is closed and of measure zero. For a given $T > 0$, we  can therefore  construct a  pseudodifferential cutoff
\[
 \chi_T(q, D): L^2(\partial M) \to L^2(\partial M) \] with the
property that $\chi_T$ is microsupported in the set where $L^*(q, \eta)
>> T$ and $1 - \chi_T$ has small support.  Thus, given  $\e_0>0$, we can find a
$\chi_T \in C^\infty(B^* \partial M)$  so that
\begin{equation}\label{3.1}
0\le \chi_T \le 1, \quad \int_{B_q^* \partial M}  (1 - \chi_T)\, d\sigma <\e_0, \quad
{\mathcal L}_x^T \cap \supp \chi_T = \emptyset.
\end{equation}  In fact, we may  construct $\chi_T(q, \eta) \in C_0^{\infty}(B^* \partial M)$ so that
\begin{equation} \label{SMALL} \int_{B_q^* \partial M} (1 - \chi_T)(q, \eta)d\sigma(\eta)\le 1/T^2, \end{equation} and 
$$|L^*(q, \eta)|\geq T, \quad \text{on} \, \, \text{supp}\; \chi_T.
$$ 
 We also denote by $\chi_T(q, D) = Op_{\hbar}(\chi_T)$ its quantization as a semi-classical
pseudo-differential operator on $\partial M$ with the sequence of Planck constants  $\hbar_j = \lambda_j^{-1}$. That is, $\chi_T(q, D)$ is a semi-classical pseudo-differential
operator with symbol $\chi_T(q, \eta)$ (see e.g.  \cite{Zw} for background on semi-classical pseudo-differential operators, and \cite{ctz,tz1, TZ2, HHHZ}  for background in the context of this article.)

We then use a semi-classical microlocal cutoff $\chi_h^{\epsilon} (x, D)$ on $\partial M$ to a conic $\epsilon$ neighborhood of the
glancing set to decompose   $$\Pi_{[0,\lambda]}^b(q,q)=  \Pi_{[0,\lambda]}^{\leq \epsilon, b} +  \Pi_{[0,\lambda]}^{\geq \epsilon, b}$$ into  a glamcing term $\Pi_{0, \lambda]}^{\leq \epsilon, b} = \chi_h^{\epsilon}(x, D) \Pi_{[0, \lambda]}^b(q,q') |_{q = q'}$ microlocalized to an angle
$\leq \epsilon$ around the glancing set and  the complementary  hyperbolic term  $ \Pi_{[0,\lambda]}^{\geq \epsilon, b}$
using $(I - \chi_h^{\epsilon}(x, D))$.
We further decompose $\Pi_{[0,\lambda]}^{\geq \epsilon, b}$ by using a microlocal cutoff to a neighborhood of the union of 
 loops
of length $\leq T$ cut out; (iii) the complementary hyperbolic term mirolocalized to loops of length $\leq T$:

\begin{equation} \label{MPU} \begin{array}{lll}  \Pi_{[0,\lambda]}^{\geq \epsilon, b} (q,q)  & =  &
[(\chi_T(q, D) + (I - \chi_T(q, D))  \circ \Pi_{[0,\lambda]}^{\geq \epsilon, b}  \circ\\&&\\&& \circ (\chi_T(q, D)  + (I - \chi_T(q, D))](q,q). \end{array} \end{equation}

There are two types (I, II) of terms among the four in \eqref{MPU}. The first type I  (of which there are three terms) has at least one factor of $\chi_T$ (on either
side of $\Pi_{[0,\lambda]}^b$). The second type (of which there is just one term)  has the form
\begin{equation} \label{TWO}  (I - \chi_T(q, D)))  \circ \Pi_{[0,\lambda]}^b  \circ  (I - \chi_T(q, D)))](q,q). \end{equation}
The first type of term can be dealt with entirely by wave front set considerations. The second
is more complicated but can be dealt with by Lemma \ref{MLEM}.

We break up the analysis into one for short times and one for the rest.  We  fix an even function
$$\beta\in C^\infty_0(\R) \;\; \mbox{which equals one on}\; [-2\delta,2\delta]. $$
 The analysis for small times $[-\delta, \delta]$ involves the short
time parametrix  and pointwise Weyl laws discussed in \S \ref{PTWSECT} and \S \ref{PTGENSECT}  while for times $|t| \geq \delta$ it is not necessary to construct a parametrix.

\subsection{Glancing terms}

\begin{lemma}\label{GLANCINGLOOP}

Let $\chi$ be the pseudo-differential operator  cutting off to $\epsilon$ neighborhood of the  glancing set. Then  
$$\sum \rho(\lambda-P) |\chi e_j(q)|^2 \le C\epsilon \lambda^{n-1}.$$
\end{lemma}

\begin{proof} The statement follows from  Lemmas \ref{SF2} or \ref{rhoSaLEM}.
The left hand side equals
$$\iint \Hat \rho(t) e^{-it\theta} e(t,q,\theta) e^{it\lambda} dt d\theta,$$
which is \eqref{SqG} (up to a sign), and by Lemma \ref{MLEM} and \eqref{BDYTERM},   $$e(0,   \lambda, q) = 
 O(\epsilon \lambda^{n-1}) + O_\epsilon(\lambda^{n-2}).$$

\end{proof}

\subsection{Terms of both type I and II for $|t| \leq \delta$}

Here we do not gain from using the cutoff $\chi_T$ since $\delta$ is less than the minimal loop length and
the only singularity is at $t = 0$. 
We must use the analysis of the singularity at $t = 0$ in \S \ref{PTWSECT} and \S \ref{PTGENSECT}.

\begin{slem} \label{slembeta}  For
$T\ge 1$,
$$\left|\frac1{2\pi T}\int \beta(t)\rho(t/T)
E_B^b (t, q,q) \, e^{-it\la}\, dt\right|
\le	 CT^{-1}\la^{n-1}. $$

\end{slem}

\begin{proof}
 For
$T\ge 1$,
$$\left|\frac1{2\pi T}\int \beta(t)\rho(t/T) \, e^{it\tau}\, dt\right|
\le C_NT^{-1}(1+|\tau|)^{-N}, \quad N=1,2,3,\dots.$$
Hence,
\begin{multline*}\left|\frac1{2\pi T}\int \beta(t)\rho(t/T)
E_B^b (t, q,q) \, e^{-it\la}\, dt\right|
\\
\le CT^{-1}\sum_{j=0}^\infty (1+|\la-\la_j|)^{-n-1}(\phi^b_j(q))^2 
= O(T^{-1}\lambda^{n-1}).
\end{multline*}

Here, we used that
$$\begin{array}{lll} \sum_{j=0}^\infty (1+|\la-\la_j|)^{-n-1}(\phi^b_j(q))^2 &  = & \int (1 + (\lambda - \mu)^{-n-1} d_{\mu}
\Pi^b_{[0,\mu]}(q,q)  \\&& \\
 &  = & \int (1 + (\lambda - \mu)^{-n-1} d_{\mu} (\mu^n + R(\mu, q) )
  \\&& \\
 &  = & n \int (1 + (\lambda - \mu)^{-n-1} \mu^{n-1} +   \\&& \\
 &  + &(n-1) \int (1 + (\lambda - \mu)^{-n-2}  | R(\mu, q)| d\mu\\&&\\
& \leq & 2 n \int (1 + (\lambda - \mu)^{-n-2}  \mu^{n-1} d\mu = O(\lambda^{n-1}).
\end{array}$$

In the  last line we used  the remainder estimate in the 
 pointwise  Weyl law in   Proposition \ref{PTWEYL}. \end{proof}

\subsection{Terms of type I for $|t| \geq \delta$}

In this section we prove,

\begin{slem} \label{TYPEI}
\begin{equation}\label{3.3}
\frac1{2\pi T}
\int \bigl(1-\beta(t)\bigr) \rho(t/T) \, \bigl( \chi_T(q, D) \circ E_B^b\bigr)(t, q,q) \,
e^{-it\la} \, dt =O_{T}(1).
\end{equation}
\end{slem}

\begin{proof} The estimate follows immediately from the fact that


\begin{equation} \label{SMOOTH} E_B^b(t)\chi_T(q, D)^*(q,q),\;\;  \chi_T(q, D) \circ  E_B^b(t,q,q) \in C^{\infty}(0,
T). \end{equation}
To see this, let $d_{\partial M}$ denote the Riemannian distance along $\partial M$,
and let $T^{(j)}(q)$ be the length of the interior billiard trajectory corresponding corresponding to the billiard orbit starting at $q$ and ending at $\beta^j(q)$.
By assumption,
$$\min_{\eta\in \supp \chi}d_{\partial M} (q,\beta^k(q, \eta)) )>0, \; \rm{if}\;\; \sum_{j = 0}^k T^{(j)}(q, \eta)) < T $$
and so there must be a neighborhood ${\mathcal N}$ of $q$ in $\partial M$  so that if $q' \in {\mathcal N}$ then
$$\{\beta^j(q', \eta) ,  0 < j \leq k \; \mbox{s.th.}\;  \sum_{j = 0}^k T^{(j)}(q', \eta)) < T \}\notin {\mathcal N}, \quad \text{if } \, \,
\eta\in \supp \chi.$$
Therefore by \eqref{WF1}-\eqref{WF2},
\begin{equation}\label{3.2}
\bigl(\chi_T(q, D) \circ E_B^b\bigr)(t, q,q)\in C^\infty(\{\delta \le |t|\le T\}\times M).
\end{equation}

Taking the Fourier transform of the smooth function \eqref{SMOOTH} completes the proof.

\end{proof}

\begin{rem} Alternatively, the Lemma follows from wave front considerations (i.e., (2.6)) and the fact that $L^*(q,\eta) >> T$ on the support
of $\chi_T(q,\eta)$. \end{rem}

\subsection{Terms of type II  for $\delta \leq |t| \leq T$}

\begin{slem} \label{3.4} With the above notation,
\begin{equation}
\left|\frac1{2\pi T}\int \bigl(1-\beta(t)\bigr) \rho(t/T) \, E_B^b (t, q,q) \,
e^{-it\la} \, dt\right|\le C_T\sqrt{\e_0}\, \la^{n-1}+O_{T}(\la^{n-2}).
\end{equation}

\end{slem}

\begin{proof} 

In view of SubLemma \ref{TYPEI} we may insert $(I - \chi_T(q, D))$ to the left of $E_B^b(t,q,q')$. 

We define
$$m_{T,\beta}(\tau)=\frac1{2\pi T} \int
\bigl(1-\beta(t)\bigr) \, \rho(t/T) \, e^{it\tau} \, dt. $$
The  left side of Sublemma \ref{3.4} equals
$$\Bigl|\sum_{j=0}^\infty m_{T,\beta}(\la-\la_j) \, ((I - \chi_T(q, D))  \phi^b_j)(q) \, \phi^b_j(q)\Bigr|.$$
Since $m_{T,\beta}\in {\mathcal S}(\R)$, we can use the Cauchy-Schwarz inequality
to see that for every $N=1,2,3,\dots$ this is dominated by a constant
depending on $T$, $\beta$ and $N$ times
$$\bigl(\sum_{j=0}^\infty (1+|\la-\la_j|)^{-N}|(I - \chi_T(q, D)) \phi^b_j(q)|^2\bigr)^{\frac12} \,
\bigl(\sum_{j=0}^\infty (1+|\la-\la_j|)^{-N}|\phi^b_j(q)|^2\bigr)^{\frac12}.$$
The statement then follows  from \eqref{SMALL},  \eqref{3.3}, 
and in particular from the pointwise cutoff local Weyl laws of Proposition \ref{OpaPTWEYL}   (see  \eqref{z.8}) for the non-glancing part.

\end{proof}

\subsection{Completion of the proof of  Lemma \ref{1.11} and Theorem \ref{SoZthm}.}

 Lemma   \ref{1.11}  follows from Lemma \ref{GLANCINGLOOP}, from  \eqref{rho}, Sublemma \ref{slembeta},   and from Sublemma \ref{3.4}.  As explained at the start of the proof, Lemma \ref{1.11}
implies Theorem \ref{SoZthm}.

\subsection{Tauberian theorem and conclusion of the proof of  Proposition \ref{EPLEM}.}

To prove Proposition \ref{EPLEM} we apply the Tauberian theorem  \ref{ITT} to the three terms, one the glancing part and two from the hyperbolic part; there
should be four terms from the hyperbolic part, but as above we neglect the `off-diagonal' ones since they are smaller.   We give the details because the purpose now is
to get good remainder estimates.  As above, let  $\beta \in C_0^{\infty}(\R)$, $\beta =1$ for $|t| \leq 2 \delta$,
$\beta = 0$ for $|t| \geq 1$ and $\beta_T(t) = \beta(t/T).$ Let $$\left\{ \begin{array}{l} e_I (\lambda) = \Pi^{\leq \epsilon, b}_{[0, \lambda]}(q,q) = \sum_{j: \lambda_j \leq \lambda}  |\chi e_j(q)|^2 \\ \\
e_{II}(\lambda) =  (\chi_T(q, D)  \Pi^{\geq \epsilon, b}_{[0, \lambda]} (\chi_T(q, D)(q,q),\\ \\
e_{III} (\lambda) =( (I - \chi_T(q, D))  ) \Pi^{\geq \epsilon, b}_{[0, \lambda]} (I - \chi_T(q, D))  (q,q). \end{array} \right. $$
All three terms depend on $(\epsilon, T)$. We use the previous notation  $\epsilon_0, $ resp. $ \epsilon$ for  the  cutoff angles to  the
loopset of length $\leq T$, resp. the glancing set.

For each of the three spectral functions $e(\lambda)$, there exist asymptotic expansions of $\rho * d e(\lambda)$ of
the form
 $$\int_0^{\infty} \rho_T(\lambda - \mu) d e(\mu) = a_0 \lambda^{d-1}
+ a_1 (d-1) \lambda^{d-2} + o(\lambda^{d-2}).  $$
For the hyperbolic terms $e_{II}, e_{III}$, the expansions are essentially standard pointwise asymptotics in \cite{I80,HeZ,HHHZ},
since the wave group is a standard Fourier integral operator in the hyperbolic region. The new expansion of
this article (following \cite{M}) is for the glancing term $e_I$, given in  Lemmas\;\ref{SF2}, \ref{rhoSaLEM} \;and \; \ref{GLANCINGLOOP}. By the Tauberian theorem \ref{ITT}, a non-decreasing function satisfying $|e(\lambda) | \leq \lambda^d$. 
Each satisfies
$$|e(\lambda) - a_0 \lambda^d - a_1 \lambda^{d-1}| \leq \frac{C a_0}{T} \lambda^{d-1} + o(\lambda^{d-1}). $$
where the estimates of $\frac{a_0}{T}$ are as follows
$$\left\{ \begin{array}{ll} \rho * d e_I (\lambda) : a_0  \leq C \epsilon, & \rm{Lemmas}\;\ref{SF2}, \ref{rhoSaLEM} \;and \; \ref{GLANCINGLOOP}  \\ \\
\rho_T * e_{II}(\lambda): a_0  \leq C_T\sqrt{\e_0}\, & \rm{SubLemma}\; \ref{3.4}\\ \\
\rho_T * \e_{III} (\lambda):  \leq  CT^{-1}, & \rm{SubLemma} \; \ref{slembeta}. \end{array} \right. $$
These esimates immediately imply Propositions \ref{PTWEYL} and \ref{EPLEM}.

 \end{proof}

\subsection{\label{REMARKON} A remark}
We make a remark on assumption \cite[(1.3)]{M}. 

 Melrose assumes in  \cite[(1.3)]{M} that for $t \not= 0$ the generalized broken geodesic flow $\Phi^t$
fixes no points of $S^* \partial M$, i.e. tangential directions to $\partial M$.
That is, he assumes there are no geodesic loops starting in a tangential direction to the boundary which return tangentially.  He notes  that it is a generic property
of manifolds with concave boundary. But examples exist, such as  complements of polar caps on spheres, since there  exist closed geodesics which touch the boundary tangentially.

This assumption is only used in \cite[Section 4]{M} to eliminate possible
singularities of the wave trace $e(t) = \rm{Tr} \cos t \sqrt{-\Delta}$ due to  closed
geodesics which intersect the boundary tangentially. The assumption eliminates the need to analyze the form of the  wave trace singularities 
due to such diffractive geodesics.

The assumption is not necessary in this article, because we do not need to calculate the
coefficients of the singularities at glancing loops. We  only use
the glancing part of the parametrix to prove normality at $t =0$ of
the singularity and to obtain upper bounds.  In  Theorem \ref{SoZthm} we cut out all closed loops,
including any that might touch the boundary tangentially at beginning
and end.

\section{\label{EXAMPLESECT} Saturating example: Hemisphere of $S^n$}
We now construct examples on the Hemisphere $S_+^n$ of the unit
sphere in $\R^{n+1}$ which saturate the Cauchy data sup norm bounds.
Let  $\sigma (x) = (x', x_n) \to (x', - x_n)$ denote the isometric involution
of $S^n$ through the equator.   to get even/odd eigenfunctions. 

Let  $Z_N^q$ denote the ($L^2$ normalized)  zonal spherical harmonic 
on $S^n$ of degree $N$
with pole at $q \in \partial S_+^n$. Up to $L^2$ normalization 
$Z_N^q = \Pi_N(\cdot, q)$, where $\Pi_N: L^2(S^n) \to \hcal_N$ is
the orthogonal projection to the spherical harmonics of degree N.  To obtain a  Neumann / Dirichlet  eigenfunction
we average it relative to $\sigma$ to get 
$$\Phi^q_N : = \left\{ \begin{array}{ll}  \half (Z_N^q(x', x_n) + Z_N^q(x', - x_n)),
 & \rm{Neumann} \\ &\\ 
 \half (Z_N^q(x', x_n) - Z_N^q(x', - x_n)), & \rm{Dirichlet} \end{array} \right. $$
 
  To see that $\Phi_q^N$ extremizes the sup norm in the Neumann  case, it suffices to observe that  its  value at $q$
is the same as that of $Z_N^q$. Indeed,  up to $L^2$ normalization, 
 $\Phi_q^N$ equals 
$\half(\Pi_N(x, q) + \Pi_N(\sigma(x), q))$ in the Neumann case.  The latter is well-known to saturate
the sup-norm bound (see \cite{SZ2} for background).

The Dirichlet case is more complicated since the normal derivative of 
$\Phi_N^q$ vanishes at $q$. Indeed, the normal derivative
is the same as its $\partial_{x_n}$-derivative at the equator $\partial S_+^n$
and vanishes at $q$ since $Z_N^q$ has a critical point at $q$. However,
we can see that $\partial_{\nu} \Phi_N^q(q')$ asymptotically saturates the sup-norm bound for certain
$q' \in \partial S_+^n$ at a distance $\frac{1}{N}$ from $q$.

As noted above,  $\Phi_N^q(x', x_n)$ is the  $L^2$ normalized  orthogonal 
projection $\Pi_N(x, q)$. As is well-known, the latter is  a certain Legendre polynomial  $P^n_N(\langle x, q \rangle)$ of the inner product  $\langle x, q \rangle$
in the ambient $\R^{n+1}$.  We have $\langle q, q' \rangle  = \cos r (q, q')$
where $r(q,q')$ is the distance in $S^n$.  Specifically,  $P^n_N(t)$ is given by the  Rodrigues' formula and solves the Legendre equation,
$$(1 - t^2) P_N'' + (1 - n) t P_N' + N(N + n -2) P_N = 0. $$
Here, $t = \cos r$ and the pole corresponds to $t = 1$. As the picture below illustrates when the
dimension $n =2$, $P_N(x)$ takes its maximum at $t = 1$ and takes
its maximal slope within $\frac{1}{N}$ of $t = 1$. The technical complication 
is that one must multiply the slope of the Legendre function by the derivative
of its argument $\langle x, q \rangle$.

Along $\partial S_+^n$,  $\partial_{\nu_y} \Phi^q_N(y) = \partial_{x_n} \Phi_N^q(y', 0)$. Since $\Phi^q$ is a radial function in $r(q, \cdot)$,
$\partial_{x_n} \Phi_N^q(y', 0) = \partial_r  P_N(\cos r)$ along $\partial S_+^n$.
Since
$\frac{\partial}{\partial r} f(\cos r) = f'(\cos r) \sin r = f'(t) \sqrt{1 - t^2}$,
$$\begin{array}{lll} 2 \partial_{\nu_q} \Phi^q_N(x)  & = &  \partial_{x_n} [P_N^n(\langle q, x \rangle
- P^n_N(\langle q, \sigma(x) \rangle] \\&&\\ &&= P_N'(\langle q, x \rangle) \partial_{x_n}
\langle q, x \rangle - P_N'(\langle q, \sigma(x) \rangle) \partial_{x_n}
\langle q, \sigma(x) \rangle\\ &&\\ & = &  2 P_N'(\langle q, (x', 0) \rangle
\partial_{x_n} \langle q, (x', x_n) \rangle = 2 P_N'(t) \sqrt{1 - t^2}. 
\end{array} $$
Thus, we need to show that $ P_N'(t) \sqrt{1 - t^2}$ times the $L^2$ normalizing constant of $P_N$ saturates the sup norm bound $N^{1 + \frac{n-1}{2}}$ at a point $t_N$ close to $1$. The normalizing constant
is given by:
$$||P_N^n||_{L^2}^2 = \frac{\omega_{n-1}}{\omega_{n-2}} \frac{1}{m(n, N)}$$
where $m(n, N)$ is the dimension of the space of spherical harmonics
of degree $N$ in dimension $n$ $\simeq N^{n-1}$, and $\omega_m$
is the surface volume of $S^m$.

To prove the saturation bound, we use classical identities for Legendre functions. One has
the recursion relation,

$$P^{n+2}_{N-1}(t) = \frac{(n-1)/2}{(- N) (N + n-2)} \frac{d}{dt} P^n_N(t)
\iff \frac{d}{dt} P^n_N(t) =  \frac{(- N) (N + n-2)}{(n-1)/2}P^{n+2}_{N-1}(t). $$

Note that $Z^q_N = \frac{P_N^n}{||P_N^n||} = C_n \sqrt{m(n,N)} P_N^n.$
All told, $$\begin{array}{lll} \partial_{\nu} \Phi^q_N(q', 0) & = &
 C_n \sqrt{m(n,N)} \frac{d}{dt} P_N^n (t) \sqrt{1 - t^2} \\ &&\\
 & = & C_n'  \sqrt{m(n,N)} \frac{(- N) (N + n-2)}{(n-1)/2}P^{n+2}_{N-1}(t)
 \sqrt{1 - t^2}\\&&\\
 & = &  C_n' \frac{ \sqrt{m(n,N)}}{\sqrt{m(n+2, N-1)}}  \frac{(- N) (N + n-2)}{(n-1)/2} Z^{n+2}_{N -1}(\cos r) \sin r . \end{array}$$
 Note that $\frac{ \sqrt{m(n,N)}}{\sqrt{m(n+2, N-1)}}  \simeq N^{-1}$
 and $ \frac{(- N) (N + n-2)}{(n-1)/2} \simeq N^2$ so  this
 is $ \geq N Z_{N-1}^{n+2}(\cos r) \sin r. $

 It is well-known that 
 $||Z_{N}^{n}||_{\infty} \simeq N^{\frac{n - 1}{2}}, $
and so
 $$||Z_{N-1}^{n+2}||_{\infty} \simeq N^{\frac{n + 1}{2}} \simeq N  ||Z_{N}^{n}||_{\infty}. $$
 But we need to deal with the factor of $\sin r$.

We now choose  $q' = q_N$  so that 
\begin{itemize}

\item (i)\;
 $\frac{C}{N} \leq r(q, q_N) \leq \frac{1}{C N},$
 
  \item(ii) \; $  P_{N-1}^{n+2}(t_N) \geq C  N^{\frac{n + 1}{2}}. $
  
  \end{itemize}
The first condition (i) implies that 
  $ \sin  r(q, q_N) \geq \frac{C}{ N}$ and so
  $$\partial_{\nu} \Phi^q_N(q_N, 0) \geq    C_n'  P^{n+2}_{N -1}(t_N), \; t_N = \cos r(q, q_N) \simeq 1 - (\frac{C}{N})^2. $$
  To complete the proof, we show that there exists $C >0$ so that also
  (ii) holds. Indeed, there is a uniform lower bound for $P_{N-1}^{n+2}$  of order $ N^{\frac{n + 1}{2}}$ on any interval of length $\frac{C}{N^2}$ around $t =1$ and therefore on any interval of the form $[1 - \frac{C}{N^2},1-
  \frac{\epsilon}{N^2}]$ where $C > \epsilon.$ On such an interval
  both (i) and (ii) hold,  implying the desired result,
 $$|\partial_{\nu} \Phi^q_N(q_N, 0)|  \simeq N  N^{\frac{n - 1}{2}}.$$


\section{\label{AIRYAPP} Appendix on Airy functions }

Here we recall the basic definitions and facts regarding Airy functions, referring to \cite{MT} for background.

$$Ai(z) = \frac{1}{2 \pi i} \int_L e^{v^3/3 - z v} dv, $$
where $L$ is any contour that beings at a point at infinity in the sector $- \pi/2 \leq \arg (v) \leq - \pi/6$ and ends
at infinity in the sector $\pi/6 \leq \arg(v) \leq \pi/2$.  In the region $|\arg z| \leq (1 - \delta) \pi$
in $\C - \{\R_-\}$ write $v = z^{\half} + i t ^{\half}$ on the upper half of L and $v = z^{\half} - i t^{\half}$
in the lower half. Then
$$Ai(z) = \Psi(z) e^{- \frac{2}{3} z^{3/2}} $$
with
$$\Psi(z) \sim z^{-1/4} \sum_{j = 0}^{\infty} a_j z^{- 3j/2}, \;\; a_0 = \frac{1}{4} \pi^{-3/2}. $$

Set
$$A_{\pm}(s) = Ai(e^{\pm \frac{2 \pi i}{3} } s). $$

Let 
\begin{equation} \label{Phipm} \Phi_{\pm} (z) = \frac{A'_{\pm}(z)}{A_{\pm}(z)}. \end{equation}
One has 
$$A_{\pm}'(z) Ai(z) - Ai'(z) A_{\pm}(z) = c_{\pm} $$
hence
$$\Phi_{\pm} - \Phi i(z) = c_{\pm} [A_{\pm}(z) Ai(z) ] ^{-1}. $$
The Airy quotients satisfy the nonlinear ODE
$$\Phi'(z) = z - \Phi(z)^2,  \;\; \Phi = \Phi i (z), \mbox{or}\; \Phi_{\pm}(z). $$
Also
$$\Phi_{\pm}(z) = \omega^{\mp 2} \Phi i (\omega^{\mp 2}(z)). $$ 
The poles of $\Phi_{+}$ lie on $e^{- i \pi/3} [-s_0, \infty]$ in the fourth quadrant ($s_0 < 0$). The poles of $\Phi_-(z)$
lie on $e^{i \pi/3} [-s_0, \infty]$ in the first quadrant. Outside any conic neighborhood of these rays,
$$\Phi_{\pm}(z) \sim z^{\half} \sum_{j = 0}^{\infty} b_j^{\pm} z^{- 3j/2}, \;\;\; |z| \to \infty. $$
One has
$$\Phi_+(z) = \overline{\Phi_-(\bar{z})}, \;\;\; b_0^{\pm} = 1. $$
It follows that the Airy quotient $\Phi_{\pm}$ is a classical symbol of order $\half$. 
$$| D^j \frac{A_{\pm}'}{A_{\pm}}(\zeta)| \leq C_j (1 + |\zeta|)^{\half - j}, $$
and
$$\frac{A_{\pm}'}{A_{\pm}}(\zeta) \sim \sum_{j \geq 0} a_j^{\pm} \zeta^{\half - \frac{3 j}{2}}, \;\; \Re \zeta \to \infty. $$
As stated in Section \ref{ORDERsect}, if we substitute $\beta_0$ for
$\zeta$ then $\Phi_{\pm}(\beta_0) \in S^{\frac{1}{3}}_{\frac{1}{3}, 0}.$  

Note that the asymptotic expansions for $a_G, b_G$ in Section 
\ref{ORDERsect} involve $\Phi_{\pm}^{-1} = \frac{A_{\pm}}{A'_{\pm}}$,
and its composition with $\beta_0$ has order $-\frac{1}{3}.$

\section{\label{TAUBERAPP} Appendix on Tauberian theorems}

A model Fourier Tauberian theorem from \cite[Lemma 17.5.6]{HoI-IV} is the following:

\begin{lemma}\label{HorTauber}  Suppose that $\mu$ is a non-decreasing temperate function satisfying
$\mu(0)=0$ and that $\nu$ is a function of locally bounded
variation such that $\nu(0)=0$.  Suppose  that $\phi\in {\scal}({\Bbb R})$ is a fixed positive function
satisfying $\int \phi(\lambda)d\lambda =1$ and $\hat \phi(t)=0$,
$t\notin [-1,1]$.  If
$\phi_a(\tau)=a^{-1}\phi(\frac{\tau}{a})$,
$0<\sigma\le\sigma_0$, assume that for $\lambda\in {\Bbb R}$
\begin{equation}\label{A}
|d\nu(\tau)|\le
M_0 (|\tau| + a_0)^{n-1} d \tau,
\end{equation}
and that
\begin{equation}\label{B}
|((d\mu-d\nu)*\phi_a)(\tau)|\le M_1 (a_1 +|\tau|)^{\kappa},
\end{equation}
for some $a_0, a_1 \geq a$ and  $\kappa \in [0, n-1].$
Then
\begin{equation}\label{C}|\mu(\lambda)-\nu(\lambda)|\le C_m\left( \,
M_0 a (1+|\lambda|)^{n-1} +M_1 (a+|\lambda|)  (a_1+|\lambda|)^{\kappa} \right)
\end{equation}
where $C_m$ is a uniform constant.
\end{lemma}

It implies the following version stated in Ivrii \cite{I80}.

\begin{theorem} \label{ITT} Let $\beta \in C_0^{\infty}(\R)$, $\beta =1$ for $|t| \leq \half$,
$\beta = 0$ for $|t| \geq 1$ and $\beta_T(t) = \beta(t/T).$ Let $e(\lambda)$
be a non-decreasing function satisfying $|e(\lambda) | \leq \lambda^d$. Then
if $$\int_0^{\infty} \hat{\beta}_T(\lambda - \mu) d e(\mu) = a_0 \lambda^{d-1}
+ a_1 (d-1) \lambda^{d-2} + o(\lambda^{d-2}), $$
then
$$|e(\lambda) - a_0 \lambda^d - a_1 \lambda^{d-1}| \leq \frac{C a_0}{T} \lambda^{d-1} + o(\lambda^{d-1}),$$
where $C$ depends only on the dimension $d$ and on $\beta$.
\end{theorem}





\end{document}